\documentclass[10pt,reqno,fleqn]{amsart}
\setlength{\oddsidemargin}{0.5in}
\setlength{\evensidemargin}{0.5in}

\setlength{\textwidth}{5.5in}

\setlength{\textheight}{8.5in}
\usepackage[utf8]{inputenc}
\usepackage{amsmath}
\usepackage{setspace}
\usepackage{bbm}
\usepackage{amsthm}
\usepackage{acronym}
\usepackage{amscd}
\usepackage{amssymb}
\usepackage{mathtools}
\usepackage{stmaryrd}
\usepackage{pifont}
\newcommand\MTkillspecial[1]{
\bgroup
\catcode`\&=9
\let\\\relax%
\scantokens{#1}%
\egroup
}
\DeclarePairedDelimiter\abs\lvert\rvert
\reDeclarePairedDelimiterInnerWrapper\abs{star}{
\mathopen{#1\vphantom{\MTkillspecial{#2}}\kern-\nulldelimiterspace\right.}
#2
\mathclose{\left.\kern-\nulldelimiterspace\vphantom{\MTkillspecial{#2}}#3}}

\usepackage{epsfig}
\usepackage{verbatim}
\usepackage{graphicx}
\usepackage{amsthm}
\usepackage{breqn}
\usepackage{amsaddr}
\usepackage{color}
\usepackage{url}
\usepackage[bookmarks=true]{hyperref}
\usepackage{bookmark}
\newtheorem*{prop*}{Proposition}
\newtheorem*{theorem*}{Theorem}

\usepackage{tikz-cd}

\title{Weyl's law for arbitrary Archimedean type}
\author{Ayan Maiti$^1$}
\address{$^1$Department of Mathematics, Purdue University}
\email{$^1$maitia@purdue.edu}


\newtheorem{lemma}{Lemma}

\begin{document}

\maketitle

\begin{abstract}
We generalize the work of Lindenstrauss and Venkatesh establishing Weyl's Law for cusp forms from the spherical spectrum to arbitrary Archimedean type. Weyl's law for the spherical spectrum gives an asymptotic formula for the number of cusp forms that are bi-$K_{\infty}$ invariant in terms of eigenvalue $T$ of the Laplacian. We prove an analogous asymptotic holds for cusp forms with Archimedean type $\tau$, where the main term is multiplied by $\dim{\tau}$. While in the spherical case the surjectivity of the Satake Map was used, in the more general case that is not available and we use Arthur's Paley-Wiener theorem and multipliers.

\end{abstract}
\subsection*{Mathematics Subject Classification} Primary $11F72$, $22E55$, Secondary $32N15$.
\section{Introduction}
The purpose of this article is to prove Weyl's law for the cuspidal automorphic forms, generalizing a result of Lindenstrauss and Venkatesh \cite{LV} from spherical case (bi-$K_{\infty}$ invariant) to the cuspforms of arbitrary $K_{\infty}$-type.\\
Let $G$ be a semisimple linear algebraic group which is split and adjoint over $\mathbb{Q}$. Let $G(\mathbb{R})$ be the $\mathbb{R}$-points of $G$. Let $\Gamma \subset G(\mathbb{R})$ be an arithmetic subgroup, which we assume to be torsion-free for simplicity. Let $K_{\infty}$ be a maximal compact subgroup of $G(\mathbb{R})$. Let $L^2(\Gamma \backslash G(\mathbb{R}))$ be the space of square integrable $\Gamma$ invariant functions on $G(\mathbb{R})$. Let $\mathcal{Z}(\mathfrak{g}_{\mathbb{C}})$ be the center of universal enveloping algebra of the complexification of the Lie algebra $\mathfrak{g}$ of $G(\mathbb{R})$. A  cuspform for $\Gamma$ is a smooth and $K_{\infty}$-finite complex-valued functions $f$, which is a simultaneous eigenfunction of $\mathcal{Z}(\mathfrak{g}_{\mathbb{C}})$ and which satisfies
$$\int\limits_{\Gamma \bigcap N_{P}(\mathbb{R})\backslash N_{P}(\mathbb{R})}f(nx)dn=0,$$
for all unipotent radicals $N_{P}$ of proper rational parabolic subgroups $P$ of $G$ \cite{La}. It can be shown that cusp forms are square- integrable. Let $L_{\text{cusp}}^2(\Gamma \backslash G(\mathbb{R}))$ be the closure of the linear span of all cusp forms. Let $R$ be the right regular representation of $G(\mathbb{R})$ on $L^2(\Gamma \backslash G(\mathbb{R}))$. Suppose $(\tau,V_{\tau})$ denotes an irreducible finite dimensional representation of the maximal compact $K_{\infty}$. We let
$$(L^2(\Gamma \backslash G(\mathbb{R}))\otimes V_{\tau})^{K_{\infty}}$$
be the space of homogeneous vector bundle on the Riemannian symmetric space $G(\mathbb{R})/K_{\infty}$. These are space of functions that satisfies the following condition:
$$f(gk)=\tau(k^{-1})f(g).$$
Let $\Omega_{G(\mathbb{R})}$ be the casimir operator in $\mathcal{Z}(\mathfrak{g}_{\mathbb{C}})$, the center of the universal enveloping algebra. Then $-\Omega_{G(\mathbb{R})} \otimes Id$ induces a self adjoint operator $\Delta_{\tau}$ whose restriction to
$$L_{\text{cusp}}^2(\Gamma \backslash G(\mathbb{R}),\tau):= (L_{\text{cusp}}^2(\Gamma \backslash G(\mathbb{R}))\otimes V_{\tau})^{K_{\infty}}$$
has pure point spectrum with finite multiplicities. Let us denote them as $$0\leq \nu_{1}(\tau) < \nu_{2}(\tau)\cdots$$ with finite multiplicities. Suppose $\mathcal{E}(\nu_{i}(\tau))$ denotes the respective eigenspace corresponding to the eigenvalue $\nu_{i}$. We define the eigenvalue counting function as
$$N_{\text{cusp}}^{\Gamma}(\nu,\tau)= \sum_{\nu_{i}(\tau)\leq \nu}\text{dim}(\mathcal{E}(\nu_{i}(\tau))).$$
\par Let $\mathbb{H}$ be the upper half plane and let $\Gamma$ be a congruence subgroup of $SL(2,\mathbb{Z})$. Let $\Delta$ be the hyperbolic laplacian on $\mathbb{H}$. Using the notations above let $N_{\text{cusp}}^{\Gamma}(\nu)$ be the eigenfunction counting function for eigenvalues upto $\nu$. Selberg \cite{Se}, using his celebrated trace formula for the group $SL(2,\mathbb{R})$, proved the following version of Weyl's law:
$$\lim_{\nu \to \infty}N_{\text{cusp}}^{\Gamma}(\nu)\sim \text{Vol}(\Gamma \backslash \mathbb{H})\frac{\nu}{4\pi}.$$
If $d=\text{dim}G(\mathbb{R})/K_{\infty} $, then it has been conjectured by Sarnak \cite{Sa} that for $d>1$ and for an irreducible lattice $\Gamma$:
$$\limsup_{\nu \to \infty}\frac{N_{\text{cusp}}^{\Gamma}(\nu)}{\nu^{d/2}}\sim \frac{\text{vol}(\Gamma \backslash G)}{(4\pi)^{d/2}\Gamma(d/2+1)},$$
where $\Gamma(n)$ denotes the Gamma function.\\
A similar conjecture was made by M\"uller \cite{Mu1} for $N_{\text{disc}}^{\Gamma}(\nu,\tau)$, the counting function for the discrete spectrum of the Laplace operator $\Delta_{\tau}$. This conjecture states that for any arithmetic subgroup and any $K_{\infty}$-type $\tau$ we have:
$$\limsup_{\nu \to \infty}\frac{N_{\text{cusp}}^{\Gamma}(\nu,\tau)}{\nu^{d/2}}\sim \frac{\text{vol}(\Gamma \backslash G)\text{dim}(\tau)}{(4\pi)^{d/2}\Gamma(d/2+1)}.$$
Up to now this conjecture has been proved for the following cases: for the congruence subgroups of $SO(n,1)$ by Reznikov \cite{Re}, congruence subgroups of $\text{Res}_{F/\mathbb{Q}}SL_{2}$, where $F$ is totally real field, by Efrat \cite{Ef}, for $\Gamma=SL_{3}(\mathbb{Z})$ by Stephen D. Miller \cite{Mi}, and for torsion free arithmetic subgroups of $SL_{n}(\mathbb{R})$ by M\"uller \cite{Mu}.\\
Labesse and M\"uller \cite{LM} proved a weak version of Weyl's law for almost simply connected, simply connected, semisimple algebraic groups. To explain their method we introduce the following notations:
Let $G(\mathbb{A})$ be the group of adelic points of algebraic group $G$ defined over $\mathbb{Q}$. Let $K=K_{\infty}\times K_{f}$ be an open compact subgroup of $G(\mathbb{A})$. Then by Strong Approximation Theorem we have that for $\Gamma=G(\mathbb{Q}) \cap K_{f}$:
$$L_{\text{cusp}}^2(G(\mathbb{Q}) \backslash G(\mathbb{A})/K_{f}) \simeq L_{{\text{cusp}}}^2(\Gamma \backslash G(\mathbb{R})).$$
To understand the spectral side of the Arthur-Selberg trace formula we need to give a representation theoretic point of view of the eigenvalue counting function $N_{\text{cusp}}^{\Gamma}(\nu,\tau)$. Let $\Pi_{\text{cusp}}(G(\mathbb{A}))$ be the unitary irreducible cuspidal subrepresentations of the regular representation of $G(\mathbb{A})$ on $L_{\text{cusp}}^2(G(\mathbb{Q}) \backslash G(\mathbb{A})/K_{f})$. Let $\Pi_{\text{cusp}}(G(\mathbb{R}))$ be the subrepresentations of the regular representations of $G(\mathbb{R})$ acting on 
$$L_{\text{cusp}}^2(G(\mathbb{Q}) \backslash G(\mathbb{A})/K_{f}).$$
Any element $\pi \in \Pi_{\text{cusp}}(G(\mathbb{A}))$ can be written as $\pi=\pi_{\infty}\otimes \pi_{f}$, where $\pi_{\infty} \in \Pi_{\text{cusp}}(G(\mathbb{R}))$. Let $H_{\pi_{\infty}}(\tau)$ be the $\tau$-isotypical subspace of $(\pi_{\infty},H_{\pi_{\infty}})$. Let $H_{\pi_{f}}^{K_{f}}$ be the subspace of $K_{f}$-fixed vectors in $(\pi_{f},H_{\pi_{f}})$. Let $m(\pi_{\infty})$, resp $m(\pi)$ denote the multiplicity with which $\pi_{\infty}$, resp $\pi$ occurs as a subrepresentation of $G(\mathbb{R})$, resp $G(\mathbb{A})$ in the discrete subspace $L_{\text{cusp}}^2(G(\mathbb{Q}) \backslash G(\mathbb{A})/K_{f})$. Then we have the following :
$$m(\pi_{\infty})=\sum_{\pi^{'}\in \Pi_{\text{cusp}}(G(\mathbb{A}))}m(\pi^{'})\text{dim}H_{\pi_{f}}^{K_{f}}$$
for all $\pi^{'}$ such that $\pi_{\infty}^{'}=\pi_{\infty}$. Suppose $\nu_{\pi}$ denotes the Casimir eigenvalue of $\pi_{\infty}$. Then we take the sub-collection $\Pi_{\text{cusp}}(G(\mathbb{A}))_{\nu}$ such that $|\nu_{\pi}|\leq \nu$. Similarly we define $\Pi_{\text{cusp}}(G(\mathbb{R}))_{\nu}$. Then we have :
$$\sum_{\pi_{\infty}\in \Pi_{\text{cusp}}(G(\mathbb{R}))_{\nu}}m(\pi_{\infty})\text{dim}(\text{Hom}_{K_{\infty}}(H_{\pi_{\infty}}(\tau),V_{\tau}))= N_{\text{cusp}}^{\Gamma}(\nu,\tau).$$
The usual idea of proving the asymptotic formula for the counting functions is to apply the Arthur-Selberg trace formula for a family of test functions on $G(\mathbb{A})$ whose Archimedean part arise from the integral kernel function of the integral operator $e^{-t\Delta_{\tau}}$, for $0\leq t <1$, and the non-Archimedean parts are idempotents $e_{K_{f}}$. In the spectral side the terms corresponding to the innerproduct of Eisenstein series will contribute trivially when $t \to 0$ as shown by M\"ueller \cite{Mu} in the case of $SL_{n}(\mathbb{R})$. But the calculation is delicate for arbitrary groups. In this regard it will be useful to find test functions such that convolution operators with respect to them have purely cuspidal image.\\ 
Let $S$ be a finite set of non-archimedean places. Then, following the simple trace formula introduced by Flicker and Kazdhan \cite{FK}, Labesse and M\"uller \cite{LM} considered the test functions decomposed as $f=f_{\infty}\otimes f_{S}\otimes e_{K_{f}^S}$, where $f_{S}$ are the pseudo-coefficients of Steinberg representation of $G(\mathbb{Q}_{S})$ acting on $L_{\text{cusp}}^2(G(\mathbb{Q}) \backslash G(\mathbb{A})/K_{f})$. Hence the image of the right regular representation with respect to the above test function projects into the subspace $L_{\text{cusp}}^2(G(\mathbb{Q}) \backslash G(\mathbb{A})/K_{f},S)$, generated by the vectors of automorphic representations which are Steinberg at places in $S$. Define the eigenvalue counting function $N_{\text{cusp}}^{\Gamma}(\nu,\tau,S)$ with respect to $S$ in $L_{\text{cusp}}^2(G(\mathbb{Q}) \backslash G(\mathbb{A})/K_{f},S)$. Using this idea they were able to show that:
$$\limsup_{\nu \to \infty}\frac{N_{\text{cusp}}^{\Gamma}(\nu,\tau,S)}{\nu^{d/2}}= \frac{C_{S}(\Gamma)\text{vol}(\Gamma \backslash G)\text{dim}(\tau)}{(4\pi)^{d/2}\Gamma(d/2+1)}.$$
But the non-triviality of the constant $C_{S}(\Gamma)$ would depend on the choice of the compact set $K_{f}$, as $\Gamma = G(\mathbb{Q}) \cap K_{f}$, where $K_{p}$ for $p\in S$ lies inside the minimal parahoric compact subgroup.\\
To get the full Weyl's law for spherical cuspforms on semisimple Algebraic group of split and adjoint type, Lindenstrauss and Venkatesh \cite{LV} were able to find a collection of test functions from the spherical Hecke algebra $C_{c}^{\infty}(G_{S}//K_{S})$ that has purely cuspidal image, where $S$ is a set of places containing the Archimedean places, and $K_{S}=K_{\infty}\times K_{f}$, where $K_{\infty}$ is a maximal compact subgroup of $G(\mathbb{R})$ and $K_{f}$ is a hyperspecial maximal compact subgroup of $G_{S\backslash \infty}$. They use the Satake Isomorphism for spherical Hecke algebra to prove the existence of such functions. Also the arithmetic subgroup is chosen as a congruence subgroup of $G(\mathbb{Z}[S^{-1}])$, so that the projection of $\Gamma$ on the finite number of inequivalent conjugacy classes of parabolic subgroups of $G_{S}$ have large center. Hence there are constraints on the spectral parameters of Eisenstein series at different places. They used the Paley-Wiener Theorem for the spherical functions to choose the family of test functions of the form $\phi_{n}\star f_{t}$, for $n\in \mathbb{Z}, 0\leq t<1$, where $\phi_{n}$'s are the family of test functions constructed to get the purely cuspidal image. Now instead of using Arthur's trace formula they use a partial trace formula introduced by Miller \cite{Mi} to get the lower bound with the same constant as Donnelly's upperbound \cite{Do}. \\
Jack Buttcane in \cite{BU} proved the Weyl's Law for the case $G=GL(3,\mathbb{R})$, using Kuznetsov trace formula.\\

Our theorem, which is a generalization of the result of Lindenstrauss and Venkatesh \cite{LV}, to the case of cuspforms of arbitrary $K_{\infty}$-type is the following:
\begin{theorem*}
Let $G$ be a semi-simple, split, adjoint  linear algebraic group over $\mathbb{Q}$. Let $G_{\infty}=G(\mathbb{R})$ be the real points of $G$, and let $\Gamma$ be a torsion free arithmetic subgroup of $G$. Suppose $d=\text{dim}(G_{\infty}/K_{\infty})$. Let $(\tau,V_{\tau})$ be an irreducible finite-dimensional representation of $K_{\infty}$. Let $N_{\text{cusp}}^{\Gamma}(T,\tau)$ be the counting function of the cuspidal eigenfunctions of $\Delta_{\tau}$ with eigenvalue $\leq T$. Then we have the following asymptotic formula:
\begin{eqnarray}\label{1}
\frac{N_{\text{cusp}}^{\Gamma}(T,\tau)}{T^{d/2}}\sim \frac{\text{dim}(\tau)
\text{vol}(\Gamma \backslash G_{\infty})}{(4\pi)^{d/2}\Gamma(d/2+1)}, \quad \text{as} \quad T \to \infty.
\end{eqnarray}
\end{theorem*}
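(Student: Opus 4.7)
The plan is to follow the architecture of Lindenstrauss--Venkatesh \cite{LV}: combine Donnelly's heat-kernel upper bound \cite{Do} with a matching lower bound produced by feeding a carefully designed family of test functions into Miller's partial trace formula \cite{Mi}. The novel input, forced by the generality of $\tau$, is that the spherical Satake isomorphism used in \cite{LV} must be replaced by Arthur's Paley--Wiener theorem and the multiplier theorem.

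Concretely, after passing to the adelic setting and writing $\Gamma = G(\mathbb{Q}) \cap K_{f}$ with $K_{f} \subset G(\mathbb{A}_{f})$ an open compact subgroup, I would study the trace of the convolution operator associated with a test function of the form $f = (\phi \star h_{t}^{\tau}) \otimes e_{K_{f}}$, where $h_{t}^{\tau}$ is essentially the integral kernel of $e^{-t\Delta_{\tau}}$, normalized so that its action on each $\pi_{\infty}$-isotype is by the scalar $\text{dim}(\tau)\, e^{-t\nu_{\pi_{\infty}}}$. A standard Tauberian step combined with Donnelly's bound reduces the theorem to showing that the cuspidal part of the spectral side of the trace formula satisfies the claimed $T^{d/2}$-asymptotic from below with constant $\text{dim}(\tau)\, \text{vol}(\Gamma \backslash G_{\infty}) / ((4\pi)^{d/2}\Gamma(d/2+1))$.

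To achieve this, the Eisenstein contribution in Arthur's trace formula must be shown to be negligible. I would take $\phi = \mu_{h} \star \phi_{S}$, where $\phi_{S}$ is a spherical Hecke element at an auxiliary finite set $S$ of non-archimedean places, chosen (as in \cite{LV}) so that the congruence structure of $\Gamma \subset G(\mathbb{Z}[S^{-1}])$ forces the finite components of the relevant Eisenstein series into a discrete family with large central parameter; and $\mu_{h}$ is an Arthur multiplier: a compactly supported, $K_{\infty}$-finite distribution that acts on each admissible $\pi_{\infty}$ by the scalar $h(\nu_{\pi_{\infty}})$ for a prescribed Weyl-invariant Paley--Wiener function $h$ on $\mathfrak{a}_{\mathbb{C}}^{\ast}$. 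Choosing $h$ to vanish on the finitely many archimedean Eisenstein parameters compatible with the fixed level suppresses the continuous spectrum contribution, while the convolution with $h_{t}^{\tau}$ guarantees that the remaining cuspidal piece has the required Weyl-type density as $t \to 0$.

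The main obstacle is the simultaneous construction and spectral control of the multiplier $\mu_{h}$ in the $\tau$-typical setting. Arthur's Paley--Wiener theorem describes $C_{c}^{\infty}(G(\mathbb{R}))$ through its operator-valued Fourier transform, subject to Weyl-intertwining and exponential-type growth conditions, and one must verify together that (i) $\mu_{h}$ can be taken to preserve the $\tau$-isotype, so that the main term of the spectral side carries the extra $\text{dim}(\tau)$ factor, (ii) together with $\phi_{S}$ and the congruence structure it annihilates the Eisenstein contribution in Miller's partial trace formula, and (iii) the geometric side of this partial trace formula remains bounded uniformly in the auxiliary parameters, so that the Tauberian argument yields exactly the claimed constant. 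Once these points are settled, matching the resulting lower bound with Donnelly's upper bound gives \eqref{1}.
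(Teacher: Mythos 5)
Your overall skeleton (Donnelly's upper bound, a partial pre-trace inequality for the lower bound, Arthur's Paley--Wiener theorem and multipliers at the archimedean place, auxiliary finite places tied to $\Gamma\subset G(\mathbb{Z}[S^{-1}])$) is the same as the paper's, but the step by which you remove the continuous spectrum is wrong as stated. You assert that the congruence structure leaves only ``finitely many archimedean Eisenstein parameters compatible with the fixed level'' and that the multiplier $h$ can be chosen to vanish there. What the arithmeticity of $\Gamma_{A_{i,S}}$ (the $S$-units of the split tori, via \cite{PR}) actually forces is that the induction parameter $\nu_S=(\nu_v)_{v\in S}$ be trivial on a lattice, i.e.\ essentially that $\nu_p=\nu_q$ for all $p,q\in S$: a positive-dimensional ``diagonal'' subvariety whose archimedean projection is \emph{all} of $i\mathfrak{a}_P^{*}$. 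So the admissible archimedean Eisenstein parameters are not a finite set, and an archimedean multiplier vanishing on them would have to vanish on all of $i\mathfrak{a}_{0,\infty}^{*}$, which would simultaneously annihilate the tempered cuspidal spectrum (whose minimal-parabolic parameters fill the same space) and hence the $t^{-d}$ main term you need. The cancellation must be achieved jointly across the places of $S$: in the paper one takes a regular function $R$ on the Bernstein variety of $G(\mathbb{Q}_{S\setminus\infty})$ (Bernstein-center elements convolved with $\mathbbm{1}_{K'}$; in the spherical case of \cite{LV}, Satake polynomials) vanishing on the diagonal $\{\nu_p=\nu_q\}$, and combines it with an essentially arbitrary archimedean factor via Arthur's and Bernstein's Paley--Wiener theorems; the archimedean multiplier is reserved for the spectral localization $\hat{\zeta}(\nu_{\omega_\infty}+t\nu_\infty)$ approximating the ball indicator, which is what produces the main term with the $\dim(\tau)$ factor.

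A second, smaller discrepancy: you frame the problem as showing ``the Eisenstein contribution in Arthur's trace formula'' is negligible. The paper never invokes Arthur's trace formula or any estimate on Eisenstein series. The test function is constructed (Section 4) so that the vanishing $\mathrm{Ind}(\omega_S,\nu_S)(\Phi)=0$ for all $\nu_S$ trivial on $\Gamma_{A_{i,S}}$ implies $\sum_{\Gamma_{A_{i,S}}}\int_{N_{i,S}}\Phi(k_1 n\gamma m k_2)\,dn=0$, whence its convolution image is purely cuspidal; one then integrates the kernel over a compact $\Omega\subset\Gamma\backslash G_S$, the purely cuspidal spectral side dominates the geometric side, and the nontrivial conjugacy classes are controlled by the $t^{-d+1/2}$ Eisenstein-integral bound. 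If you repair the vanishing construction as above (in particular, placing the vanishing at the finite places rather than on the archimedean multiplier), the rest of your outline does go through along the paper's lines.
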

We follow the same methodology as in Lindenstrauss-Venkatesh \cite{LV} to prove the Weyl's law (without the remainder term) for an arbitrary irreducible $K_{\infty}$-type. But as the Abel-Satake map is not surjective in this case, we can not use the same construction to get the non-trivial test function whose image under convolution lies inside the cuspidal space. Hence, we use the Arthur's Paley-Wiener theorem for Archimedean \cite{Ar} and non-Archimedean \cite{Ar3} cases. We note that we are only concerned with the main term of the Weyl's law. To get an estimation of the error terms as derived by M\"uller, one has to use the full trace formula and estimates on the constant terms of the Eisenstein series.\\
This draft is organized as follows: in section 2 we discuss the necessary preliminaries of Harmonic Analysis of reductive groups over real and $p$-adic fields. In section 3 we prove some estimates of Plancherel Measure necessary to prove the asymptotic formula of the Main term of the Weyl's law, section 4 we prove a condition on test functions so that their convolution image is purely cuspidal, section 5 and 6 we provide the analysis to derive Theorem mentioned above.

I would like to thank my advisor Professor Mahdi Asgari for suggesting this problem, and Professors Roberto Camporesi, Jayce Getz, Werner M\"uller, Sug Woo Shin and Roger Zierau for their help and useful inputs at various stages of this work.

\section{Preliminaries}
In this section we will recall some basic facts of Harmonic Analysis.(see \cite{LV}).
\subsection{Parabolic Subgroups} Let $G$ be a semisimple split adjoint linear algebraic group over $\mathbb{Q}$. Let $S$ be a finite set of places containing $\infty$. We fix a minimal parabolic, i.e. a Borel subgroup, $P_{0} \supset A_{0}$, where $A_{0}$ is a maximal $\mathbb{Q}$-spilt torus. Suppose $N_{0}=R_{u}(P_{0})$ is the unipotent radical of $P_{0}$. We have the Levi decomposition $P_{0}=M_{0}N_{0}$. Let $P$ be a parabolic subgroup containing $P_{0}$ with a Levi decomposition $P=M_{P}N_{P}$. Moreover we let $A_{P}=$ Split part of $Z(M_{P})$, where $Z$ denotes the center.\\
Let $F=\mathbb{Q}_{p}$ or $\mathbb{R}$. 
Fix a maximal split torus $A_{0}$ in $G(F)$. We denote by
$W = W(G, A_{0})$ the Weyl group of $G(F)$ with respect to $A_{0}$. Let $\Phi = \Phi(G, A_{0})$ be the set of roots. Fix a minimal parabolic subgroup $B$ containing $A_{0}$. The choice of $B$ determines the set of simple roots $\Pi$ and the set of positive roots $\Phi^{+} \subset \phi$. If $\alpha \in \Phi^{+}$, we write $\alpha > 0$.\\
Let $P = MN \subset G(F)$ be a standard parabolic subgroup of $G(F)$. We denote
by $\Pi_{M} \subset \Phi$ the corresponding set of simple roots. Let $A_M$ be the split component of the center of $M$, $X(M)_F$ the group of $F$-rational characters of $M$. If $\Pi_{M} = \Theta$, we also use $A_{\Theta}$ to denote $A_M$ . Hence, $A_{\emptyset} = A$ and $A_{\Pi} = A_G$.\\
The restriction homomorphism $X(M)_F \mapsto X(A_M)_F$ is injective and has a finite cokernel. Therefore, we have a canonical linear isomorphism:
$$\mathfrak{a}_{M}^*=X(M)_{F}\otimes_{\mathbb{Z}}\mathbb{R}\cong X(A_{M})_{F}\otimes_{\mathbb{Z}}\mathbb{R}.$$
If $L$ is a standard parabolic subgroups such that $L \subset M$, then
$$A_{M}\subset A_{L}\subset L \subset M.$$
The restriction $X(M)_{F} \mapsto X(L)_{F}$ induces an injective map and its restriction induces a linear injection $i_{M}^L : \mathfrak{a}_{M}^* \mapsto \mathfrak{a}_{L}^*$. The restriction $X(A_{L})_{F} \mapsto X(A_{M})_{F}$ induces a linear surjection $r_{M}^{L}: \mathfrak{a}_{L}^* \mapsto \mathfrak{a}_{M}^*$. Let $(\mathfrak{a}_{M}^{L})^*$ be the kernel of the restriction $r_{M}^{L}$. Then
$$\mathfrak{a}_{L}^*=i_{M}^{L}(\mathfrak{a}_{M}^*)\oplus (\mathfrak{a}_{M}^{L})^*.$$
There is a homomorphism $H_{M}: M \mapsto \mathfrak{a}_{M}=Hom(X(M),\mathbb{R})$ such that:
\[
    \left|\nu(m)\right|_{F}= 
\begin{cases}
    q^{(\nu,H_{M}(m))},& \text{if } F=\mathbb{Q}_{p}\\
    e^{(\nu,H_{M}(m))},& \text{if } F=\mathbb{R}
\end{cases}
\]
, for all $m \in M$ and $\nu \in X(M)_{F}$.\\
We set $G_{S}=G(\mathbb{Q}_{S})$, $A_{0,S}=A_{0}(\mathbb{Q}_{S})$, $M_{0,S}=M_{0}(\mathbb{Q}_{S})$ and $N_{0,S}=N_{0}(\mathbb{Q}_{S})$. We denote a parabolic subgroup over $\mathbb{Q}_{S}$ as $P_{S}=M(\mathbb{Q}_{S})N(\mathbb{Q}_{S})$ with its corresponding Levi decomposition. We can think of this parabolic as a direct product of parabolic subgroups of a product of groups. Let $G_{\infty}=G(\mathbb{R})$. We have an Iwasawa decomposition $G_{\infty}=N_{\infty}A_{\infty}^{o}K_{\infty}$, where $K_{\infty}$ is a maximal compact subgroup of $G(\mathbb{R})$.\\
Let $K_{S}=K_{\infty}\prod\limits_{p<\infty}G(\mathbb{Z}_{p})$, where $G(\mathbb{Z}_{p})$ is a maximal compact subgroup of $G(\mathbb{Q}_{p})$ for all prime $p$. We will assume that $S$ has the property that, for each finite $p \in S$ and for each parabolic $P(\mathbb{Q}_{p})$ containing $A_{0}(\mathbb{Q}_{p})$, $K_{p} \bigcap M_{P} (\mathbb{Q}_{p})$ is the stabilizer in $M_{P}(\mathbb{Q}_{p})$ of a special vertex in the building of $M_{P}(\mathbb{Q}_{p})$; and moreover this vertex belongs to the apartment associated to the maximal torus $A_{0}(\mathbb{Q}_{p})$.
This condition is satisfied for almost all finite $p$. Moreover, $K_{\infty} \bigcap M_{P}(\mathbb{R})$ is a maximal compact subgroup of  $M_{P}(\mathbb{R})$, and $K_{S} \bigcap M(\mathbb{Q}_{S})$ is a maximal compact subgroup of $M(\mathbb{Q}_{S})$.\\
The map $N(\mathbb{Q}_{S})\times M(\mathbb{Q}_{S})\times K_{S} \mapsto G_{S}$ is surjective (Iwasawa). We equip each $G(\mathbb{Q}_{p})$, for $p$ finite, with the Haar measure which assigns $G(\mathbb{Z}_{p})$ the mass 1. We equip $K_{\infty}$ with the Haar measure of mass 1, and then choose the Haar measure on $G_{\infty}$ which is compatible with the Riemannian metric defined on the Riemannian symmetric space $G_{\infty}/K_{\infty}$.
Let $\Phi^{+}$ be the system of positive roots of $A_{0,S}$ with respect to $N_{0,S}$ and let $\Delta \subset \Phi^{+}$ be the set of simple roots. Let $\delta_{S}$ be the square root of the modulus character of $A_{0,S}$.\\
The following lemma which is due to Harish-chandra going to describe the correspondence between parabolic subgroups of $G(\mathbb{Q}_{p})$ contained in some parabolic subgroup $Q(\mathbb{Q}_{p})$ and the parabolic subgroups of $M_{Q}(\mathbb{Q}_{p})$ for all $p \in S$.
\begin{lemma}
There is an one to one correspondence between Parabolic subgroup $P(\mathbb{Q}_{p})$ of $G(\mathbb{Q}_{p})$ which are contained in $Q(\mathbb{Q}_{p})$, and parabolic subgroups $^{*}P(\mathbb{Q}_{p})$ of $M_{Q}(\mathbb{Q}_{p})$. The correspondence are as follows:
If $Q(\mathbb{Q}_{p})=M_{Q}(\mathbb{Q}_{p})N_{Q}(\mathbb{Q}_{p})$ and  $P(\mathbb{Q}_{p})=M_{P}(\mathbb{Q}_{p})N_{P}(\mathbb{Q}_{p})$ are the corresponding Levi decompositions, then the Levi decomposition of $^{*}P_{Q}=P(\mathbb{Q}_{p})\cap M_{Q}(\mathbb{Q}_{p})=M_{P}(\mathbb{Q}_{p})N_{Q}^{P}(\mathbb{Q}_{p})$, where $A_{P}(\mathbb{Q}_{p})=A_{Q}^{P}(\mathbb{Q}_{p})A_{Q}(\mathbb{Q}_{p})$, $N_{P}(\mathbb{Q}_{p})=N_{Q}^{P}(\mathbb{Q}_{p})N_{Q}(\mathbb{Q}_{p})$.
\end{lemma}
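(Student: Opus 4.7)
The plan is to establish the bijection by constructing maps in both directions and verifying they are mutual inverses, then reading off the asserted Levi decomposition. Throughout, I work with algebraic parabolic subgroups defined over $\mathbb{Q}_p$ and exploit the fact that everything is standard (containing the fixed minimal parabolic), so we may rely on the classification of standard parabolics by subsets of simple roots $\Pi$.

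First, given a standard parabolic $P(\mathbb{Q}_p) \subset Q(\mathbb{Q}_p)$, I set ${}^*P := P(\mathbb{Q}_p) \cap M_Q(\mathbb{Q}_p)$. Since $P \supset P_0$ and $P \subset Q$, its unipotent radical satisfies $N_P \supset N_Q$ and, writing $N_Q^P := N_P \cap M_Q$, one gets the refinement $N_P = N_Q^P \cdot N_Q$ (a semidirect product). Intersecting $P = M_P N_P$ with $M_Q$ and using $M_P \subset M_Q$ together with $N_Q \cap M_Q = \{1\}$ then yields ${}^*P = M_P \cdot N_Q^P$. To see that ${}^*P$ is a parabolic subgroup of $M_Q$, I would observe that ${}^*P$ contains the Borel $P_0 \cap M_Q$ of $M_Q$ (which is a Borel because $P_0 = M_0 N_0 = M_0 N_0^Q N_Q$ gives $P_0 \cap M_Q = M_0 N_0^Q$), and that $M_P N_Q^P$ is closed, self-normalizing, and has unipotent radical $N_Q^P$ with Levi $M_P$ — so it is parabolic with the indicated Levi decomposition. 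The split components decomposition $A_P = A_Q^P \cdot A_Q$ follows from the standard fact that $\mathfrak{a}_P^* = (\mathfrak{a}_Q^P)^* \oplus i_Q^P(\mathfrak{a}_Q^*)$ applied to the chain $A_P \subset A_Q^P A_Q$ inside $M_Q$.

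For the reverse direction, given a parabolic ${}^*P$ of $M_Q(\mathbb{Q}_p)$ with Levi decomposition ${}^*P = M \cdot U$, I set $P := {}^*P \cdot N_Q = M \cdot (U \cdot N_Q)$. Since $M_Q$ normalizes $N_Q$, the product $U \cdot N_Q$ is a unipotent subgroup normalized by $M$, and $P$ is a closed subgroup of $Q$ containing the Borel $M_0 N_0^Q N_Q = P_0$. Hence $P$ is a standard parabolic of $G(\mathbb{Q}_p)$ contained in $Q(\mathbb{Q}_p)$, with Levi factor $M_P = M$ and unipotent radical $N_P = U \cdot N_Q$. This exhibits $P$ as having the announced decomposition $N_P = N_Q^P N_Q$ with $N_Q^P = U$.

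Finally, the two maps are mutual inverses: starting from $P \subset Q$, one recovers $P = ({}^*P) \cdot N_Q$ from the computation above; starting from ${}^*P \subset M_Q$, intersecting $P = {}^*P \cdot N_Q$ with $M_Q$ returns ${}^*P$ because $N_Q \cap M_Q = \{1\}$. The main technical point, and the only place where care is required, is verifying that $P \cap M_Q$ is genuinely a parabolic subgroup of $M_Q$ (and dually that ${}^*P \cdot N_Q$ is parabolic in $G$); this reduces to checking that both contain Borel subgroups of the ambient group and that the Levi/unipotent-radical decomposition is preserved under the intersection and the product. Since $G$ is split over $\mathbb{Q}_p$ and all parabolics in sight are standard, these checks follow from the combinatorics of simple roots: parabolics between $P_0$ and $Q$ are in bijection with subsets $\Theta$ of $\Pi$ contained in $\Pi_Q$, and the same subsets $\Theta$ parameterize standard parabolics of $M_Q$, which is precisely the content of the lemma.
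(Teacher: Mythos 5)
The paper does not actually prove this lemma: it is stated twice (Sections 2 and 4) as a recalled fact attributed to Harish-Chandra, so there is no internal proof to compare against. Your argument is the standard one — the maps $P \mapsto {}^*P = P \cap M_Q$ and ${}^*P \mapsto {}^*P \cdot N_Q$, the refinement $N_P = N_Q^P N_Q$ with $N_Q^P = N_P \cap M_Q$, the triviality of $N_Q \cap M_Q$, and the verification that the two maps are mutual inverses — and it is essentially the proof one finds in Harish-Chandra or Borel--Tits, so it supplies exactly what the paper leaves implicit. One small caveat: the lemma as stated does not restrict to standard parabolics, whereas your final reduction to the combinatorics of subsets $\Theta \subset \Pi_Q$ only directly handles parabolics containing the fixed minimal parabolic $P_0$. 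Your first two paragraphs (the explicit constructions and the inverse check) in fact go through for an arbitrary parabolic $P \subset Q$, provided you argue that $P \cap M_Q$ contains a minimal parabolic of $M_Q$ (e.g.\ by noting $P$ contains a minimal parabolic of $G$ lying in $Q$, or by conjugating inside $Q$ to the standard situation); alternatively, since every application in the paper is to parabolics containing the chosen minimal parabolic with a fixed split torus $A_0$, the standard case suffices for the paper's purposes. Apart from making that reduction explicit, the proposal is correct.
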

\subsection{Congruence Subgroup} We choose a congruence subgroup $\Gamma \subset G(\mathbb{Z}[S^{-1}])$, which is torsion free. The number of  $\Gamma$- orbits of proper $\mathbb{Q}$-parabolic subgroups is finite. Let us denote their representative as $\{P_{1},P_{2},...,P_{r}\}$.
We conjugate them by appropriate elements of $G(\mathbb{Q})$ so that the $P_{i}(\mathbb{Q}_{S})$ contain the minimal parabolic subgroup $M_{0,S}A_{0,S}N_{0,S}$. We denote them as $Q_{i,S}= M_{i,S}A_{i,S}N_{i,S}$, and their corresponding conjugating elements as $\delta_{i}\in G(\mathbb{Q})$ (i.e.  $\delta_{i}P_{i}\delta_{i}^{-1}=Q_{i}$). Let $M_{i,S}=M_{Q_{i},S}$, $N_{i,S}=N_{Q_{i},S}$ and $A_{i,S}=A_{Q_{i},S}$. Moreover we put $\Gamma_{i}=\delta_{i}\Gamma \delta_{i}^{-1}$, $\Gamma_{N_{i,S}}=\Gamma_{i}\cap N_{i,S}$ and $\Gamma_{A_{i,S}}=\Gamma_{i}\cap A_{i,S}$.\\


Let $X^*(M(\mathbb{Q}_{S}))_{\mathbb{Q}_{S}}$ be the set of $\mathbb{Q}_{S}$ characters of $M(\mathbb{Q}_{S})$, the Levi subgroup of $P_{S}$. The dual of this space, which can be identified with the Lie algebra of the maximal split part of the center of $M(\mathbb{Q}_{S})$ is   $$\mathfrak{a}_{M(\mathbb{Q}_{S})}=\text{Hom}(X^*(M(\mathbb{Q}_{S}))_{\mathbb{Q}_{S}},\mathbb{R})$$

For $\nu_{S} \in X^*(M(\mathbb{Q}_{S}))_{\mathbb{Q}_{S}}$, we have the following Harish-Chandra homomorphism $H_{M(\mathbb{Q}_S)}$:
$$e^{\langle H_{M(\mathbb{Q}_S)}(m),\nu_{S}\rangle}= \prod_{p \in S}\lvert \nu_{p}(m_{p}) \rvert_{p}.$$
Let $\omega_{S}$ be the irreducible unitary square integrable admissible representation of $M(\mathbb{Q}_{S})$ which is trivial on $A(\mathbb{Q}_{S})$. We define the set of equivalence irreducible classes of $\omega_{S}$ as $\mathcal{E}_{2}(M(\mathbb{Q}_{S}))$. For $\nu_{S} \in X^*(M(\mathbb{Q}_{S}))_{\mathbb{Q}_{S}}\otimes \mathbb{C}=\mathfrak{a}_{M(\mathbb{Q}_{S}),\mathbb{C}}^*$, we can define the following induced representation on $G_{S}$ with parameters $(\omega_{S},\nu_{S})$:\\
\begin{eqnarray*}
\text{Ind}(\omega_{S},\nu_{S})=\prod_{p \in S}\text{Ind}(\omega_{p},\nu_{p}).
\end{eqnarray*}
\subsection{Test Functions} Let $(\tau,V_{\tau})$ be an irreducible $K_{\infty}$-type, i.e. an irreducible finite dimensional representation of $K_{\infty}$. Suppose $d_{\tau}$ and $\chi_{\tau}$ denote the dimension and the character of the above representation respectively.
Let $C_{c}^{\infty}(G(\mathbb{R}),\tau,\tau)$ be the following space of functions
$$\bigg\{\phi_{\infty} : G(\mathbb{R}) \rightarrow \text{End}(V_{\tau}), \phi_{\infty}(k_{1}gk_{2})=\tau(k_{2}^{-1})\phi_{\infty}(g)\tau(k_{1}^{-1})\bigg\}.$$ 

A function $\Phi_{\infty} \in C_{c}^{\infty}(G(\mathbb{R}))$ is called bi-$K_{\infty}$-finite, if the following condition is satisfied:\\
$$\Phi_{\infty}(x)=\int\limits_{K_{\infty}}\int\limits_{K_{\infty}}d_{\tau}\chi_{\tau}(k)\Phi_{\infty}(k^{-1}xk')d_{\tau}\chi_{\tau}({k'}^{-1})dk' dk.$$
A function $\Phi_{\infty}$ is called $K_{\infty}$-central if $\Phi_{\infty}(kxk^{-1})=\Phi_{\infty}(x)$ for all $k \in K_{\infty}$ and for all $x\in G(\mathbb{R})$. We denote the convolution algebra of bi-$K_{\infty}$-finite and $K_{\infty}$-central functions as  $C_{c}^{\infty}(G(\mathbb{R}))_{K_{\infty}}^{K_{\infty}}$. This convolution algebra is isomorphic to the $\text{End}(V_{\tau})$-valued algebra defined above via the following isomorphism : 
\begin{eqnarray*}
     C_{c}^{\infty}(G(\mathbb{R}),\tau,\tau) \cong C_{c}^{\infty}(G(\mathbb{R}))_{K_{\infty}}^{K_{\infty}}\\
     \phi_{\infty} \mapsto \Phi_{\infty}=d_{\tau}\text{Tr}\phi_{\infty}\\
     \int\limits_{K_{\infty}}\Phi_{\infty}(gk)\tau(k)dk=\phi_{\infty}(g) \mapsfrom \Phi_{\infty}(g)
\end{eqnarray*}
At the non-Archimedean places of $S'= S\backslash \infty$ we define the Hecke algebra as the space of compactly supported, locally constant functions. We denote this space as $C_{c}^{\infty}(G(\mathbb{Q}_{S'}))$. We define the co-center of this Hecke algebra as the following quotient:
$$\bar{\mathcal{H}}(G(\mathbb{Q}_{S'})):= \frac{C_{c}^{\infty}(G(\mathbb{Q}_{S'}))}{[C_{c}^{\infty}(G(\mathbb{Q}_{S'})),C_{c}^{\infty}(G(\mathbb{Q}_{S'}))]}.$$
Moreover we choose the functions from this space which are bi-${K'}_{S'}$ invariant, where ${K'}_{S'}$ is an arbitrary compact subgroup of the maximal compact subgroup $K_{S'}$. We denote this subspace as $\bar{\mathcal{H}}({K'}_{S'}\backslash G_{S'}/{K'}_{S'})$. Let $\Phi_{S'} \in \bar{\mathcal{H}}({K'}_{S'}\backslash G_{S'}/{K'}_{S'})$. Hence we can combine the End$(V_{\tau})$-valued function $\phi_{\infty}$ at the Archimedean place with $\Phi_{S'}$ to obtain an endomorphism valued test function on $G_{S}$ and denote the set containing these functions as:
$$C_{c}^{\infty}( G(\mathbb{R}),\tau,\tau)\otimes \bar{\mathcal{H}}({K'}_{S'}\backslash G_{S'}/{K'}_{S'}).$$ 
We would denote the scalar valued counterpart of the above space as  $$C_{c}^{\infty}(G(\mathbb{R}))_{K_{\infty}}^{K_{\infty}}\otimes \bar{\mathcal{H}}({K'}_{S'}\backslash G_{S'}/{K'}_{S'}).$$

Let $L^2(\Gamma \backslash G_{S},V_{\tau})$ be the following set:
$$ \bigg\{f:\Gamma \backslash G_{S} \mapsto V_{\tau}: f(gk_{\infty})=\tau(k_{\infty})^{-1}f(g), (f_1,f_2)=\int_{\Gamma \backslash G_{S}}\langle f_{1}(x),f_{2}(x)\rangle_{V_{\tau}} dx\bigg\}.$$
Here the inner product makes sense as $\text{Vol}(\Gamma \backslash G_{S}) < \infty$. Also as $\Gamma$ is chosen to be a torsion free congruence subgroup, $\Gamma \backslash G(\mathbb{R})$ is a manifold. Elements of  $C_{c}^{\infty}( G(\mathbb{R}),\tau,\tau)\otimes \mathcal{C}({K'}_{S'}\backslash G_{S'}/{K'}_{S'})$ acts on this space via convolution.\\
Let $R$ be the right regular representation of $G(\mathbb{R})$ on $L^2(\Gamma \backslash G(\mathbb{R}))$. Let $\Omega_{G(\mathbb{R})}$ be the Casimir operator in $\mathcal{Z}(\mathfrak{g}_{\mathbb{C}})$, the center of the universal enveloping algebra. Then $-\Omega_{G(\mathbb{R})} \otimes Id$ induces a self adjoint operator $\Delta_{\tau}$ whose restriction to \\
$$L_{\text{cusp}}^2(\Gamma \backslash G(\mathbb{R}),\tau):= (L_{\text{cusp}}^2(\Gamma \backslash G(\mathbb{R}))\otimes V_{\tau})^{K_{\infty}}$$
has pure point spectrum with finite multiplicities. Let us denote them as 
$$0< \lambda_{1}(\tau) < \lambda_{2}(\tau)...$$ with finite multiplicities. Suppose $\mathcal{E}(\lambda_{i}(\tau))$ denotes the eigenspace corresponding to the eigenvalue $\lambda_{i}(\tau)$. We have the counting function as\\
$$N_{\text{cusp}}^{\Gamma}(T,\tau)= \sum_{\lambda_{i}(\tau)\leq \sqrt{T}}\text{dim}(\mathcal{E}(\lambda_{i}(\tau))).$$\\
We can redefine the above counting function by representation theoretic means in the following way: Let $\Pi_{\text{cusp}}(G(\mathbb{Q}_{S}))$ be the set of unitary irreducible cuspidal subrepresentations of the regular representation of $G(\mathbb{Q}_{S})$ on $L_{\text{cusp}}^2(\Gamma \backslash G(\mathbb{Q}_{S})/K_{S'}^')$. Let $\Pi_{\text{cusp}}(G(\mathbb{R}))$ be the set of subrepresentations of the regular representations of $G(\mathbb{R})$ acting on $L_{\text{cusp}}^2(\Gamma \backslash G(\mathbb{Q}_{S})/K_{S'}^')$. Any element $\pi \in \Pi_{\text{cusp}}(G(\mathbb{Q}_{S}))$ can be written as $\pi=\pi_{\infty}\otimes \pi_{S\backslash \infty}$, where $\pi_{\infty} \in \Pi_{\text{cusp}}(G(\mathbb{R}))$. Let $H_{\pi_{\infty}}(\tau)$ be the $\tau$-isotypical subspace of $(\pi_{\infty},H_{\pi_{\infty}})$. Let $H_{\pi_{S\backslash \infty}}^{K'}$ be the subspace of $K_{S'}^'$-fixed vectors in $(\pi_{S\backslash \infty},H_{\pi_{S\backslash \infty}})$. Let $m(\pi_{\infty})$, resp $m(\pi)$, be the multiplicity with which $\pi_{\infty}$, resp $\pi$, occurs as a subrepresentation of $G(\mathbb{R})$, resp $G(\mathbb{Q}_{S})$, on the cuspidal subspace $L_{\text{cusp}}^2(\Gamma \backslash G(\mathbb{Q}_{S})/K_{S'}^')$. Then we have
$$m(\pi_{\infty})=\sum_{\pi^{'}\in \Pi_{\text{cusp}}(G(\mathbb{Q}_{S}))}m(\pi^{'})\text{dim}H_{\pi_{S\backslash \infty}}^{K'}$$
for all $\pi^{'}$ such that $\pi_{\infty}^{'}=\pi_{\infty}$. Suppose $\nu_{\pi}$ denotes the Casimir eigenvalue of $\pi_{\infty}$. Then we take the subcollection $\Pi_{\text{cusp}}(G(\mathbb{Q}_{S}))_{T}$ whose elements satisfies $|\nu_{\pi}|^2\leq T$. Similarly we define $\Pi_{\text{cusp}}(G(\mathbb{R}))_{T}$. Then we have
$$\sum_{\pi_{\infty}\in \Pi_{\text{cusp}}(G(\mathbb{R}))_{T}}m(\pi_{\infty})\text{dim}\text{Hom}_{K_{\infty}}(H_{\pi_{\infty}}(\tau),V_{\tau})= N_{\text{cusp}}^{\Gamma}(T,\tau).$$
\subsection{Fourier Transform} We now define the scalar valued Fourier transform of functions on $C_{c}^{\infty}(G(\mathbb{R}))_{K_{\infty}}^{K_{\infty}}$. Let $P_{\infty}=M_{\infty}^1A_{\infty}N_{\infty}$ be the Langlands decomposition of a standard cuspidal parabolic subgroup of $G(\mathbb{R})$. Choose $\omega_{\infty} \in \mathcal{E}_{2}(M_{\infty}^1)$. Suppose $\theta_{\omega_{\infty}}$ denotes its character. Let $d_{\omega_{\infty}}$ be the formal degree of $\omega_{\infty}$. Let $\tau$ be the double representation of $K_{\infty}$ on $L^2(K_{\infty} \times K_{\infty})$ obtained from $(\tau, V_{\tau})$. Let $\tau_{M_{\infty}}$ be the restriction of $\tau$ to $K_{\infty} \bigcap M_{\infty}$. We let $L_{\omega}^2(M_{\infty},\tau_{M_{\infty}})$ be the set of $\tau_{M_{\infty}}$-spherical functions on $L^2(M_{\infty}) \otimes L^2(K_{\infty}\times K_{\infty})$. The norm in this space is defined as:
$$\lVert \psi \rVert^2 = \int_{M_{\infty}} \int_{K_{\infty}\times K_{\infty}} \|\psi(k_{1}:m:k_{2})\|^2 dk_{1}dk_{2}dm.$$
It can be made into a Hilbert algebra with the multiplication via
$$(\psi_{1}\psi_{2})(k_{1}:m:k_{2})=\int_{M_{\infty}}\int_{K_{\infty}}\psi_{1}(k_{1}:\Tilde{m}:k^{-1})\psi_{2}(k:\Tilde{m}^{-1}m:k_{2}) dk d\Tilde{m}.$$
The Fourier transform of functions $\Phi_{\infty} \in C_{c}^{\infty}(G(\mathbb{R}))$ is defined as in \cite{Ar2}: $$\Phi_{\infty} \mapsto \widehat{\Phi_{\infty}}(\omega_{\infty},\nu_{\infty}) \in L_{\omega}^2(M_{\infty},\tau_{M_{\infty}}),$$ 
where the formula of $\widehat{\Phi_{\infty}}(\omega_{\infty},\nu_{\infty})$ is\\
$$\widehat{\Phi_{\infty}}(\omega_{\infty},\nu_{\infty})(k_{1}:m:k_{2})=d_{\omega_{\infty}} \int\limits_{M_{\infty}}\int\limits_{A_{\infty}}\int\limits_{N_{\infty}}\Phi_{\infty}(k_{1}nam\widetilde{m}k_{2}) \theta_{\omega_{\infty}}(\widetilde{m}^{-1}) e^{(-\nu_{\infty}+\rho_{\infty})\ln(a)}dn da d\widetilde{m}.$$
Next We define the operator valued Fourier transform of $\Phi_{\infty}\in C_{c}^{\infty}(G(\mathbb{R}))_{K_{\infty}}^{K_{\infty}}$. Let $\pi_{\infty}\in \widehat{G(\mathbb{R})}(\tau)$, the unitary irreducible representation of $G(\mathbb{R})$ that contains $\tau$ upon restriction to $K_{\infty}$. Then $\Phi_{\infty} \mapsto \pi_{\infty}(\Phi_{\infty})$ defines the operator valued Fourier transform on the space of endomorphisms of finite dimensional vector space. From Harish-Chandra's sub-representation theorem we know that $\pi_{\infty}$ is isomorphic to an irreducible subrepresentation of a induced representation from an cuspidal parabolic $P_{\infty}$ with parameters $(\omega_{\infty},\nu_{\infty})$. Let us denote the induced representation as $\text{Ind}(\omega_{\infty},\nu_{\infty})$. Then we have the following relation \cite{Ar2}:
$$d_{\omega_{\infty}}\text{Tr}(\text{Ind}(\omega_{\infty},\nu_{\infty})\Phi_{\infty})=\int\limits_{K_{\infty}}\widehat{\Phi_{\infty}}(\omega_{\infty},\nu_{\infty})(k^{-1}:1:k)dk.$$
Let $m_{\pi_{\infty}}(\tau)$ be the multiplicity with which $\tau$ appears in the decomposition of $\pi_{\infty}$ restricted to $K_{\infty}$. Suppose $\pi_{\infty,\tau}(\Phi_{\infty})$ is the restriction of $\pi_{\infty}$ on $\mathcal{H}_{\pi_{\infty}}(\tau)$. Then we can define the spherical Fourier transform \cite{Camp} $\mathcal{F}(\Phi_{\infty})(\pi_{\infty}) \in \text{End}(\mathbb{C}^{m_{\pi_{\infty}}(\tau)})$ as follows:
$$\pi_{\infty,\tau}(\Phi_{\infty})=\mathbbm{1}_{\tau}\otimes \mathcal{F}(\Phi_{\infty})(\pi_{\infty}).$$
Let $\widetilde{\Phi_{\infty}}(x)=\overline{\Phi_{\infty}(x^{-1})}$. Then $\pi_{\infty}(\widetilde{\Phi_{\infty}})=\pi_{\infty}(\Phi_{\infty})^*$, the conjugate transpose of $\pi_{\infty}(\Phi_{\infty})$. Moreover $\text{Tr}\pi_{\infty}(\Phi_{\infty}\star \widetilde{\Phi_{\infty}})=\left|\left|\pi_{\infty}(\Phi_{\infty})\right|\right|_{\text{HS}}^2$. Let $\mu_{\infty}(\omega_{\infty},\nu_{\infty})$ be the Harish-chandra $\mu$ function corresponding to induced parameters $(\omega_{\infty},\nu_{\infty})$. Let $\mathcal{P}$ be the set of associated classes of parabolic subgroups. The Plancherel inversion of $\Phi_{\infty}$ has the following formula:

$$\Phi_{\infty}\star \widetilde{\Phi_{\infty}}(e) =\sum\limits_{\mathcal{P}}n(\mathcal{P})^{-1}\sum\limits_{P \in \mathcal{P}}\sum\limits_{\mathcal{E}_{2}(M_{\infty})}d_{\omega}(\frac{1}{2\pi i})^{q}\int\limits_{i\mathfrak{a}_{\infty}^*}\left|\left|\text{Ind}(\omega_{\infty},\nu)(\Phi_{\infty})\right|\right|_{\text{HS}}^2\mu_{\infty}(\omega_{\infty},\nu)d\nu.$$
Similarly, at the non-Archimedean place $p$ if we assume the induced parameters are $(\omega_{p}\otimes \nu_{p})$, then the the Plancherel Measure $\mu_{p}(\omega_{p})$ is defined over a connected compact manifold $\mathcal{O}_{2}(M(\mathbb{Q}_{p}))$ for each $\omega_{p} \in \mathcal{E}_{2}(M(\mathbb{Q}_{p}))$. We denote by $d_{\omega_{p}}$ the Euclidean measure of the connected compact manifold. Then we have the following Placnherel inversion formula for $\Phi_{p} \in \mathcal{C}(K'_{p}\backslash G(\mathbb{Q}_{p})/K'_{p})$:
$$\Phi_{p}\star \widetilde{\Phi_{p}}(e) =\sum\limits_{\mathcal{P}}n(\mathcal{P})^{-1}\sum\limits_{P \in \mathcal{P}}\sum\limits_{\mathcal{E}_{2}(M_{p})}d_{\omega_{p}}\int\limits_{\mathcal{O}_{2}(M(\mathbb{Q}_{p}))}\left|\left|\text{Ind}(\omega_{p})(\Phi_{p})\right|\right|_{\text{HS}}^2\mu_{p}(\omega_{p})d\omega_{p}.$$

\section{Plancherel Measure and estimates}
In this section we are going to review the explicit formula of Plancherel measure in the case of Reductive Lie group (Real and $p-$adic) and their various estimates which is one of the essential part to prove the main term of the Weyl's law. The references for  this section are \cite{HC3} and \cite{HC'}.\\
\subsection{Real case} We now review some necessary formulas in the Real case.
\subsubsection{product formula}
For this section let us fix a parabolic subgroup $(P,A)$ of $G(\mathbb{R})$, with the corresponding Langlands Decomposition $P=M^1AN$. Let $\omega_{\infty} \in \mathcal{E}_{2}(M^1)$, be a square integrable unitary irreducible class of representation of $M^1$. Let $\mu(\omega_{\infty},\nu_{\infty})$ for $\omega_{\infty} \in \mathcal{E}_{2}(M^1)$ and $\nu_{\infty} \in \mathfrak{a}_{\mathbb{C}}^*$ be the Harish-Chandra $\mu$ function of the pair $(G(\mathbb{R}),P)$. Denote by $\Sigma$, the set of all roots of $(P,A)$. A root $\alpha \in \Sigma$ is called reduced if $k\alpha \notin \Sigma$ for $0\leq k< 1/2$. Let $\Phi$ be a set of all reduced roots. For any $\alpha \in \Phi$ put \\
\begin{eqnarray}\label{2}
\mathfrak{n}_{\alpha}= \bigoplus_{k\alpha:{k\geq1}}\mathfrak{n}(\alpha),
\end{eqnarray}
where $\mathfrak{n}(\alpha)= \{x \in \mathfrak{g}: [H,X]=\alpha(H)X,\forall H \in \mathfrak{a}\}$. Let $N_{\alpha}$ be the analytic subgroup corresponding to $\mathfrak{n}_{\alpha}$.\\
Let $\sigma_{\alpha}$ be the hyper-plane given by $\alpha=0$ in $\mathfrak{a}$. Let $Z_{\alpha}$ be the centralizer of $\sigma_{\alpha}$. We put $M_{\alpha}=\prescript{0}{}{Z_{\alpha}}$, $A_{\alpha}= M_{\alpha} \cap A$ and $\theta(N_{\alpha})=\bar{N_{\alpha}}$. Then we can define the following parabolic subgroups with their corresponding Langlands decompositions\\
$$\prescript{*}{}{P_{\alpha}}= M^1A_{\alpha}N_{\alpha},\quad \bar{\prescript{*}{}{P_{\alpha}}}= M^1A_{\alpha}\bar{N_{\alpha}},\quad P=M^1AN.$$
We can now define the product formula for the Harish-chandra $\mu$ function.\\
Suppose $\mu(\omega_{\infty},\nu_{\infty}^{\alpha})$ denotes the corresponding $\mu(\omega_{\infty},\nu_{\infty})$ for the group $(M_{\alpha},\prescript{*}{}{P_{\alpha}})$. Here $\nu_{\infty} \in \mathfrak{a}_{\mathbb{C}}^*$ and $\nu_{\infty}^{\alpha}$ denotes the restriction of $\nu_{\infty}$ to $A_{\alpha}$, and $\omega \in \mathcal{E}_{2}(M^1)$. Then for a suitable constant $C_{G}$ depending on $G$ we have the following product formula \cite[Theorem 12, p. 145]{HC'}\\
\begin{eqnarray}\label{3}
\mu(\omega_{\infty},\nu_{\infty})=C_{G}\prod_{\alpha \in \Phi} \mu(\omega_{\infty},\nu_{\infty}^{\alpha}).
\end{eqnarray}

Here we have that prk $M_{\alpha}=0$, prk$\prescript{*}{}{P_{\alpha}}=1$. 
\subsubsection{Explicit formula}
When prk $G=0$, and prk $P=1$ we have the following two possibilities\\
\begin{itemize}
    \item $\mathcal{E}_2(G)\neq \emptyset$
    \item $\mathcal{E}_2(G)= \emptyset$
\end{itemize}
Here the first condition is equivalent to Rank $G=$Rank $K$. We write down the formula for the $\mu$ function in each case.\\
\begin{itemize}
\item \cite[Theorem 1, section 24]{HC3} We consider the second condition first. Let us introduce some notations. Let $Q$ be the set of positive roots of $(\mathfrak{g},\mathfrak{h})$, where $\mathfrak{h}$ is a $\theta$-stable Cartan subalgebra of $\mathfrak{g}$. Now $Q$ is the union of three disjoint parts $Q_{I},Q_{R},Q_{C}$, set of imaginary, real and complex roots respectively. Let $H_{\alpha}$ be the unique element in $\mathfrak{h}$ such that $(H_{\alpha},H)=\alpha(H)$, for all $H \in \mathfrak{h}$, where $(,)$ denotes the Killing form. With respect to Cartan involution we have the decomposition $\mathfrak{g}=\mathfrak{k}\bigoplus \mathfrak{p}$. Moreover we have $\mathfrak{h}=\mathfrak{h}_{I}\oplus \mathfrak{h}_{R}$, and a fixed parabolic subgroup with Langlands decomposition $P=M^1e^{\mathfrak{h}_{R}}N.$ We put\\
$$\widetilde{w_{I}}=\prod_{\alpha \in Q_{I}} H_{\alpha},\quad \widetilde{w_{R}}=\prod_{\alpha \in Q_{R}} H_{\alpha},\quad \widetilde{w_{+}}=\prod_{\alpha \in Q_{C}} H_{\alpha}.$$
Suppose $\mathfrak{h}_I$ is a cartan subalgebra of $\mathfrak{k}$. Then the second condition is satisfied.\\
From the theorem of Harish-Chandra \cite[section 23,Theorem 1]{HC3} we know that $\omega_{\infty} \in \mathcal{E}_2(M^1)$ corresponds to an element in orbit of  $H^{*'}_{I}$ under the action of $W(M^1/H_{I})$, where $H^{*'}_{I}$ is a subspace of $H^{*}_{I}$, which is the Cartan subgroup of $M^1$ in $K \cap M^1$(we call this element $\lambda$ which lies in the lie algebra of $H_{I}^{*'}$). Let $a^{*}$ be the element in $H^{*'}_{I}$ that corresponds to $\omega_{\infty}$. Put $\lambda=\lambda(a^{*}) \in i\mathfrak{h}_{I}^{*'}$. Then we have the following formula of the $\mu$ function:
\begin{eqnarray}\label{4}
\mu(\omega_{\infty},\nu_{\infty})=C \widetilde{w_{+}}(\lambda+\nu_{\infty})=C \prod_{\alpha \in Q_{C}}\left|(\lambda+\nu_{\infty},\alpha)\right|.
\end{eqnarray}
Here the constant $C$ depends on $G(\mathbb{R})$ and $M^1$.
In this case one can see that if $Q_{R}$ is empty \cite[2.3.5, p. 58]{Wallach}, then dim $N= | Q_{C}|$. So $\mu$ is a polynomial in $\nu_{\infty}$ of degree dim $N$. 
Hence as $t \to \infty$ we have 
$$\mu(\omega_{\infty},t\nu_{\infty}) \sim C_{\nu_{\infty}}t^{\text{dim}(N)},$$ for a non zero positive constant $C_{\nu_{\infty}}$. 

Therefore when $G=M_{\alpha}$, $N=N_{\alpha}$, $e^{\mathfrak{h}_{\mathbf{R}}}=A_{\alpha}$ we have, $$\mu(\omega_{\infty},t\nu_{\infty}^{\alpha}) \sim C_{\nu^{\alpha}_{\infty}}t^{\text{dim}(N_{\alpha})} \quad \text{as}\quad t \to \infty.$$
\item  \cite[Sec. 36, p. 190]{HC3} Next we consider the case of rank $G$= rank $K$.  Let $\mathfrak{h}$ be a $\theta$-stable Cartan subalgebra of $\mathfrak{g}$ with $\mathfrak{h}=\mathfrak{h}_{I}\oplus \mathfrak{h}_{R}$, dim $(e^{\mathfrak{h}_{\mathbf{R}}})=1$, and a fixed parabolic subgroup with Langlands decomposition $P=M^1e^{\mathfrak{h}_{R}}N$. So dim $N= 1+|Q_{C}|.$
\par
Let $H_{I}$ be the analytic subgroup corresponding to $\mathfrak{h}_{I}$. Let $Q$ be the set of positive roots of $(\mathfrak{g},\mathfrak{h})$ which is the disjoint union of imaginary$(Q_{I})$, real$(Q_{R})$ and complex roots$(Q_{C})$. Let us denote by $\alpha$ the unique root in $Q_{R}$. For $a^* \in H^*_{I}$, let us define:\\
$$\mu_{0}(a^*,\nu_{\infty}):= d(a^*)^{-1} \text{Tr}\left(\frac{\pi i\nu_{\infty}^{\alpha}\sinh{\pi i\nu_{\infty}^{\alpha}}}{\cosh{\pi i\nu_{\infty}^{\alpha}}-\frac{(-1)^{\rho_{\alpha}}}{2}(\sigma_{a^*}(\gamma)+\sigma_{a^*}(\gamma^{-1}))}\right).$$
Here $d(a^*)$ is the degree, $\nu_{\infty}^{\alpha}=2\frac{(\nu_{\infty},\alpha)}{(\alpha,\alpha)}$, $\rho_{\alpha}= 2\frac{(\rho,\alpha)}{(\alpha,\alpha)} \in \mathbb{Z}$, where $\rho$ is the half of the sum of the positive roots of $(\mathfrak{g},\mathfrak{h})$), $\sigma_{\alpha}$ is the irreducible representation of $H_{I}$ whose character is $a^*$ and $\gamma$ is a fixed element in $H_{I}$. So the above expression has the form:\\
$$u(z)=\frac{z\sinh{\pi z}}{\cosh{\pi z}+k},$$
for a fixed real number $k \geq -1$.
\par
As $\omega_{\infty} \in \mathcal{E}_{2}(M^1)$ corresponds to an element $a^* \in H^{*'}_{I}$, which we denote by $\lambda=\lambda(a^{*}) \in \mathfrak{h}_{I}^{*'}$ as the corresponding element in the Lie algebra of $H_{I}$.\\
As before we define:
$$\widetilde{{w}_{+}}(\omega_{\infty}:\nu_{\infty})=\prod_{\alpha \in Q_{C}}\left|(\lambda+\nu_{\infty},\alpha)\right|.$$
Moreover, we define
$$\mu_{0}(\omega_{\infty},\nu_{\infty})= \frac{1}{|W(M^1/H_{I})|}\sum_{W(M^1/H_{I})}\mu_{0}(sa^*,\nu_{\infty}).$$
With these notations in mind we can write the Harish-Chandra $\mu$-function as follows
\begin{eqnarray}\label{5}
\mu(\omega_{\infty},\nu_{\infty})=C \lvert \alpha \rvert \mu_{0}(\omega_{\infty},\nu_{\infty})\widetilde{{w}_{+}}(\omega_{\infty}:\nu_{\infty}).
\end{eqnarray}
We can show that $u(tz) \sim C_{z}t$, as $t \to \infty$, when $z \in \mathbb{R}$ and $C_{z}>0$. On the other hand $\widetilde{{w}_{+}}(\omega_{\infty}:\nu_{\infty})$ is a polynomial in $\nu_{\infty}$ with degree $\text{dim}(N)-1$. Therefore we obtain the same asymptotic expression
$$\mu(\omega_{\infty},t\nu_{\infty}) \sim C_{\nu} t^{\text{dim}N} \quad \text{as} \quad t \to \infty.$$
With the previous notation(i.e. when $G=M_{\alpha}$, $N=N_{\alpha}$, $e^{\mathfrak{h}_{\mathbf{R}}}=A_{\alpha}$) we have, $\mu(\omega_{\infty},t\nu_{\infty}^{\alpha}) \sim C_{\nu^{\alpha}} t^{\text{dim}(N_{\alpha})}$ as $t \to \infty$.\\
\end{itemize}

\subsubsection{Asymptotic estimate}
Because we have the polar decomposition $G=KP$, where $P=MAN$ is the Langlands decomposition, we have\\
$$\text{dim}(G/K)= \text{dim}(\frac{M^1}{K\cap M^1})+\text{dim}(A)+\text{dim}(N).$$

Therefore in the case where $P=P_{0}$ is the minimal parabolic we have  $$\text{dim}(G/K)=\text{dim}(A_{0})+\text{dim}(N_{0})=\text{dim}(A_{0})+\sum_{\alpha \in \Phi}\text{dim}(N_{0,\alpha}),$$ and we have the following estimate of the density of the Plancherel measure:
\begin{eqnarray}\label{6}
\int_{\nu_{\infty} \in i\mathfrak{a}_{0,\mathbb{R}}^*: (\nu_{\infty},\nu_{\infty})\leq t^2}\mu(\omega_{\infty},\nu_{\infty})d\nu \sim C_{G} C_{\nu_{\infty}}t^{\text{dim}(G/K)}, \quad \text{as} \quad  t \to \infty. 
\end{eqnarray}
Now for the pair $(M_{\alpha},A_{\alpha})$ we have polynomial bound. Here we invoke \cite[Thm 1, Sec. 25]{HC3}. The theorem states that $\mu$ can be extended to the whole complex plane meromorphically. Moreover there exists $C,r \geq 0$ such that:
$$\lvert\mu(\omega_{\infty},\nu_{\infty})\rvert \leq C(1+\lVert \text{Im}(\nu_{\infty})\rVert)^r.$$
Our job is to find an explicit value of $r$ in the inequality mentioned in Harish-Chandra's paper. 
\par
The case when $\mathcal{E}_{2}(M_{\alpha}) = \emptyset$, we have that $\mu(\omega_{\infty},\nu_{\infty}^{\alpha})$ is a polynomial in $\nu_{\infty}$ of degree  $\text{dim}(N_{\alpha})$. So in this scenario we can take $r= \text{dim}(N_{\alpha})$. And similarly for the other case we can arrive at the same estimate, as $\sup_{z \in \mathbb{C}} \lvert u(z)\rvert \leq (1+\lvert z\rvert)$, whenever $z$ is real. And the other part, namely $\widetilde{{w}_{+}}(\omega_{\infty} : \nu_{\infty})$, is a polynomial in $\nu_{\infty}$ of degree $\text{dim}(N_{\alpha})-1$.\\
Hence combining with the product formula mentioned above we conclude that for some $C'>0$
\begin{eqnarray}\label{7}
\mu(\omega_{\infty},\nu_{\infty}) \leq C'(1+|\lvert \nu_{\infty} \rvert|)^{\text{dim}(N)}.
\end{eqnarray}
\textbf{Remark}: Important point to note here the constant $C'$ does not really matter in terms of finding out the main term of the Weyl's law. Only constant that could matter is $C_{G}$.  \cite[Sec. 6.3]{LV}). 
\subsection{The $p-$adic case}
The comment that we are going to make is due to \cite[P. 355]{HC-p}. The Plancherel measure in this case is defined on $\mathcal{E}_{2}(M_{p})$, as evident from the formulas written above. For this case this set is compact,(can be denoted as $\sqcup_{\omega_{p} \in \mathcal{E}_{2}(M_{p})} \mathcal{O}_{\omega_{p}}$) hence the asymptotic estimate will not change if we are to consider the plancherel inversion formula for the group $G_{S}$.\\
Hence combining the above two subsection we arrive at the estimate that \\
\begin{eqnarray}\label{8}
\int_{\nu \in i\mathfrak{a}_{0,\infty}^*\times \mathcal{E}_{2}(M_{p}): (\nu_{\infty},\nu_{\infty})\leq t^2}\mu(\omega,\nu)d\nu d\omega_{p} \sim \alpha(G) t^{\text{dim}(G_{\infty}/K_{\infty})}.
\end{eqnarray}

\section{Condition for purely cuspidal image}
In this section we provide the necessary condition on the space of scalar valued test functions so that the image of convolution operator on scalar valued $K_{\infty}$-finite automorphic forms only consists of cuspidal $K_{\infty}$-finite automorphic forms . We closely follow  \cite[Prop. 3, second proof]{LV}.\\
First we need some preparation.
We recall a couple of lemmas due to Harish-Chandra regarding vanishing condition of Schwartz functions.\\
Let us recall some of the notations mentioned already in the preliminaries.
Let $Q=M_{Q}N_{Q}$ be a standard parabolic subgroup of $G(\mathbb{R})$ and $G(\mathbb{Q}_{p})$. Let  $\mathcal{C}(G(\mathbb{R}),\tau)$ be the Harish-Chandra Schwartz space of vector valued function which are $\tau$-spherical. These are functions from $G(\mathbb{R})$ to $V_{\tau}\subset L^2(K_{\infty}\times K_{\infty})$, where $V_{\tau}$ is viewed as a double representation of $\tau$. The action of $\tau$ could be defined as
$$\tau(k)\phi(k_{1}:g:k_{2})\tau(k')=\phi(k_{1}k:g:k'k_{2}).$$ 
Let $\mathcal{C}_{\text{cusp}}(M_{Q},\tau_{M_{Q}})$ be the space of functions which are cuspidal, $\tau_{M_{Q}\cap K_{\infty}}$-spherical, $\mathfrak{Z}(M_{Q})$-finite and $A_{Q}$-invariant. The $L^2$-completion of this space is generated by the square integrable matrix coefficients of finitely many classes of isomorphic unitary irreducible discrete series representations of $M_{Q}$ which are $A_{Q}$-invariant. For more information about this space see \cite[Chp. I,Sec. 2]{Ar}.
Define:
$$\phi_{\infty}^{Q}(la)=\int_{N_{Q}}\phi_{\infty}(nla) dn, \quad \phi_{\infty} \in \mathcal{C}(G(\mathbb{R}),\tau),$$
for all $la \in M_{Q}^1 A_{Q}$.
Moreover we write $\phi_{\infty}^{Q} \sim 0$ if the following holds:
$$\int_{M_{Q}/A_{Q}}\Big(f(l),\phi_{\infty}^{Q}(la)\Big)dl=0, \quad \forall f \in \mathcal{C}_{\text{cusp}}(M_{Q},\tau_{M_{Q}^1})$$
for all $a \in A_{Q}$, where $( , )$ denotes the inner product in the vector space $V_{\tau}$ (which is viewed as a double representation space of the action of $\tau$ on $L^2(K_{\infty} \times K_{\infty})$).
Then by \cite[vol IV, p. 149]{HC-p} we have the following.

\begin{lemma}[Archimedean case]
Let $\phi_{\infty}$ be an element in $\mathcal{C}(G(\mathbb{R}),\tau)$ such that $\phi_{\infty}^{Q} \sim 0$ for all parabolic subgroups $Q(\mathbb{R}) \subset G(\mathbb{R})$. Then $\phi_{\infty}=0$.
\end{lemma}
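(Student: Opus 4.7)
The plan is to invoke Harish-Chandra's Plancherel theorem for the Schwartz space $\mathcal{C}(G(\mathbb{R}),\tau)$, which realizes it as an orthogonal direct sum
$$\mathcal{C}(G(\mathbb{R}),\tau) = \bigoplus_{[Q]} \mathcal{C}(G(\mathbb{R}),\tau)_{[Q]}$$
indexed by associate classes of standard parabolic subgroups, where the component $\mathcal{C}(G(\mathbb{R}),\tau)_{[Q]}$ is spanned by wave packets of Eisenstein integrals $E(Q:\psi:\nu)$ built from cuspidal data $\psi \in \mathcal{C}_{\text{cusp}}(M_Q,\tau_{M_Q})$ and tempered parameters $\nu \in i\mathfrak{a}_Q^*$. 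Under this decomposition the $[G]$-summand is exactly the cuspidal Schwartz space on $G$.

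The backbone of the proof is Harish-Chandra's constant term formula for wave packets: if $\phi$ lies in the $[Q]$-component, then $\phi^P \sim 0$ whenever $P$ is not associate to $Q$, while the cuspidal pairing
$$\int_{M_Q/A_Q}\bigl(f(l),\phi^Q(la)\bigr)\,dl, \qquad f \in \mathcal{C}_{\text{cusp}}(M_Q,\tau_{M_Q^1}),$$
recovers, via the Maass-Selberg relations, the wave-packet coefficient of $\phi$ at $(\psi,\nu)$ after integrating $a$ against a suitable exponential in $\nu$. In other words, the family of maps $\phi \mapsto \phi^Q$ for $Q$ varying over associate classes together constitute an injective Fourier-type transform when restricted to the cuspidal pairings, since the coefficients of the Plancherel inversion are recovered from the cuspidal projections of the constant terms.

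With these ingredients in place, the argument itself is short. Write $\phi_\infty = \sum_{[Q]} \phi_{\infty,[Q]}$ according to the Plancherel decomposition. Fix an associate class $[Q]$. The hypothesis $\phi_\infty^Q \sim 0$ together with the orthogonality of constant terms across associate classes forces the cuspidal pairing of $\phi_{\infty,[Q]}^Q$ against every $f \in \mathcal{C}_{\text{cusp}}(M_Q,\tau_{M_Q^1})$ to vanish for every $a \in A_Q$. The constant term formula then implies that every wave-packet coefficient of $\phi_{\infty,[Q]}$ is zero. By the injectivity of the wave-packet/Fourier map on the Schwartz space (another theorem of Harish-Chandra), $\phi_{\infty,[Q]} = 0$. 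Since this holds for every $[Q]$, summing over associate classes gives $\phi_\infty = 0$.

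The main obstacle, and the reason the statement is attributed to Harish-Chandra, is the constant-term-to-Fourier-coefficient dictionary: proving that the cuspidal pairings of $\phi^Q$ really do recover the wave-packet coefficients in the $[Q]$-component. This is precisely the Maass-Selberg relation combined with the completeness of tempered characters on $M_Q$, and it is exactly what the reference \cite{HC-p} vol.\ IV, p.\ 149 is being cited for; everything else in the proof is a formal consequence of Plancherel orthogonality.
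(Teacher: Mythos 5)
Your outline is mathematically sound, but note that the paper does not actually prove this lemma: it is quoted as a theorem of Harish--Chandra, with the proof delegated entirely to the citation \cite[vol.~IV, p.~149]{HC-p}. What you wrote is essentially a reconstruction of the argument behind that citation: decompose $\mathcal{C}(G(\mathbb{R}),\tau)$ over associate classes of parabolic subgroups, use the constant-term behaviour of wave packets and the Maass--Selberg relations to see that $\phi_{\infty}^{Q}\sim 0$ kills the wave-packet coefficients of the $[Q]$-component, and finish with injectivity of the wave-packet map. That is a correct route, though it is a sketch resting on the full strength of Harish--Chandra's Plancherel theory rather than a self-contained proof. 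A slightly more economical derivation, closer to how the paper actually uses the lemma, bypasses the orthogonal decomposition: the $\tau$-spherical Fourier transform $\widehat{\phi_{\infty}}(\omega_{\infty},\nu_{\infty})$ (the analogue of the transform recalled in Section~2.4) is precisely the cuspidal pairing of the constant term $\phi_{\infty}^{Q}$ against a matrix coefficient of the discrete series $\omega_{\infty}$ of $M_{Q}^{1}$, Fourier-transformed along $A_{Q}$; since $\mathcal{C}_{\mathrm{cusp}}(M_{Q},\tau_{M_{Q}^{1}})$ is generated by such coefficients, the hypothesis for every $Q$ (including $Q=G$, which disposes of the discrete part) forces every tempered Fourier transform to vanish, and Plancherel inversion then gives $\lVert\phi_{\infty}\rVert_{L^{2}}^{2}=0$, hence $\phi_{\infty}=0$ by continuity. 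Either way the content is the same Harish--Chandra theorem the paper cites, so there is no gap; just be explicit that the absolute convergence of the constant-term integral on the Schwartz space and the density of discrete-series matrix coefficients in $\mathcal{C}_{\mathrm{cusp}}(M_{Q},\tau_{M_{Q}^{1}})$ are part of the package you are invoking, not formal consequences of orthogonality.
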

We now modify the above conditions for scalar valued bi-$K_{\infty}$-finite functions, by using the ideas mentioned in \cite[vol IV, p. 175]{HC-p}. We denote the corresponding scalar valued function as $\Phi_{\infty}$. Hence again we write:\\
we will write $\Phi_{\infty}^{Q} \sim 0$ if \\
$$\int_{M_{Q}^1}f(l)\Phi_{\infty}^{Q}(la) dl=0, \quad \forall a \in A_{Q}.$$
The above integral is a function defined on $A_{Q}$, the split part of the center of $M_{Q}$. Hence if $\Phi_{\infty}$ is a compactly supported smooth function on $G(\mathbb{R})$, then the integral above is also a compactly supported function defined on $A_{Q}$. 
We recall some characterization of parabolic subgroups of standard Levi subgroups due to Harish-chandra\\
\begin{lemma}
Let $p \in S$. There is an one to one correspondence between parabolic subgroup $P(\mathbb{Q}_{p})$ of $G(\mathbb{Q}_{p})$ which are contained in $Q(\mathbb{Q}_{p})$, and parabolic subgroups $^{*}P(\mathbb{Q}_{p})$ of $M_{Q}(\mathbb{Q}_{p})$. The correspondence are as follows:
If $Q(\mathbb{Q}_{p})=M_{Q}(\mathbb{Q}_{p})N_{Q}(\mathbb{Q}_{p})$ and  $P(\mathbb{Q}_{p})=M_{P}(\mathbb{Q}_{p})N_{P}(\mathbb{Q}_{p})$ are the corresponding Levi decompositions, then $^{*}P_{Q}=P(\mathbb{Q}_{p})\cap M_{Q}(\mathbb{Q}_{p})=M_{P}(\mathbb{Q}_{p})N_{Q}^{P}(\mathbb{Q}_{p})$ is the corresponding Levi decomposition, where $A_{P}(\mathbb{Q}_{p})=A_{Q}^{P}(\mathbb{Q}_{p})A_{Q}(\mathbb{Q}_{p})$, $N_{P}(\mathbb{Q}_{p})=N_{Q}^{P}(\mathbb{Q}_{p})N_{Q}(\mathbb{Q}_{p})$.
\end{lemma}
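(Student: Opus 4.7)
The plan is to exhibit the correspondence explicitly in both directions and then verify that the two maps are mutual inverses, deducing the Levi decomposition formulas afterwards. Starting from a parabolic $P(\mathbb{Q}_p)$ of $G(\mathbb{Q}_p)$ contained in $Q(\mathbb{Q}_p)$, I would set ${}^*P := P(\mathbb{Q}_p) \cap M_Q(\mathbb{Q}_p)$. To check this is parabolic in $M_Q(\mathbb{Q}_p)$, I would fix a minimal parabolic $B(\mathbb{Q}_p)$ of $G(\mathbb{Q}_p)$ contained in $P(\mathbb{Q}_p)$ (which exists since $P$ is parabolic and we have already chosen the standard minimal parabolic in the preliminaries). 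Then $B \cap M_Q$ is a minimal parabolic of $M_Q$ contained in ${}^*P$, which forces ${}^*P$ to be parabolic.

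In the opposite direction, given a parabolic ${}^*P$ of $M_Q(\mathbb{Q}_p)$ with Levi decomposition ${}^*P = M_P \cdot N_Q^P$, I would define $P := M_P \cdot N_Q^P \cdot N_Q$. Because $M_Q$ normalizes $N_Q$, this is a group contained in $Q(\mathbb{Q}_p) = M_Q(\mathbb{Q}_p) N_Q(\mathbb{Q}_p)$, and it contains a minimal parabolic of $G(\mathbb{Q}_p)$ obtained by lifting a minimal parabolic of $M_Q$ inside ${}^*P$. To see the two maps are mutually inverse, note that $N_Q(\mathbb{Q}_p) \cap M_Q(\mathbb{Q}_p) = \{1\}$, so $({}^*P \cdot N_Q) \cap M_Q = {}^*P$ at once. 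Conversely, for $P \subset Q$ one uses the fact that $N_Q \subset N_P$ (standard parabolics containing a common minimal parabolic satisfy $P \subset Q \Rightarrow N_P \supset N_Q$) together with the Levi-type decomposition $P = (P \cap M_Q) \cdot N_Q$, which is a consequence of $P \subset M_Q \ltimes N_Q$.

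For the Levi decomposition formulas, I would identify $M_P$ as the maximal reductive subgroup of ${}^*P$: since $M_P \subset M_Q$ and $M_P$ is reductive, it sits inside $P \cap M_Q$; meanwhile $N_Q^P := N_P \cap M_Q$ is unipotent and normal in ${}^*P$, with ${}^*P = M_P \cdot N_Q^P$. The identity $N_P = N_Q^P \cdot N_Q$ then follows from the semidirect product $N_P / N_Q \cong N_P \cap M_Q$, which can be seen at the Lie-algebra level by writing $\mathfrak{n}_P$ as the sum of root spaces for roots positive on $A_P$, and decomposing these roots into those trivial on $A_Q$ (giving $\mathfrak{n}_Q^P$) and those nontrivial on $A_Q$ (giving $\mathfrak{n}_Q$). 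The split center decomposition $A_P = A_Q^P \cdot A_Q$ follows by taking the maximal split torus in $Z(M_P)$ and using $A_Q \subset Z(M_P) \cap A_P$, with $A_Q^P$ the complementary split torus arising as the split component of $Z(M_P) \cap M_Q$.

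The main bookkeeping obstacle is keeping careful track of what ``standard'' means on each side: the correspondence should be stated for parabolics containing a fixed minimal parabolic of $G(\mathbb{Q}_p)$ with fixed maximal split torus $A_0(\mathbb{Q}_p)$, so that $B \cap M_Q$ plays the role of the reference minimal parabolic of $M_Q$ and all root-space decompositions are compatible. Once this normalization is fixed, the argument reduces to the classical Borel--Tits structure theory over $\mathbb{Q}_p$, so one could alternatively simply cite \cite[Ch.~IV, §4]{HC-p} or the analogous result in Borel's \emph{Linear Algebraic Groups}, §§14--20, after noting that the proof over a $p$-adic field is formally identical to the algebraic case.
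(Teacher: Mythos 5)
Your proposal is correct, but note that the paper itself offers no proof of this lemma: it is stated twice (in the preliminaries and again in Section 4) as a recalled structure-theoretic fact attributed to Harish-Chandra, exactly in the spirit of the citation you offer as an alternative at the end. Your explicit argument — showing $P\mapsto P\cap M_Q$ and ${}^{*}P\mapsto {}^{*}P\cdot N_Q$ are mutually inverse, using that a closed subgroup containing a minimal parabolic is parabolic (Borel--Tits), that $P\subset Q$ forces $N_Q\subset N_P$ and hence $P=(P\cap M_Q)N_Q$, and then reading off $N_P=N_Q^P N_Q$ and $A_P=A_Q^P A_Q$ from the root-space decomposition relative to $A_0$ — is the standard proof and fills in what the paper leaves implicit. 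The only point worth tightening is the phrase ``which forces ${}^{*}P$ to be parabolic'': you should say explicitly that ${}^{*}P$ is a closed subgroup of $M_Q$ containing the minimal parabolic $B\cap M_Q$, and invoke the Borel--Tits criterion over $\mathbb{Q}_p$; likewise your bookkeeping remark about fixing the standard minimal parabolic and torus $A_0(\mathbb{Q}_p)$ is exactly the normalization the paper uses, so your treatment is compatible with, and more detailed than, the paper's bare citation.
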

\begin{lemma}
Let $\Phi \in C_{c}^{\infty}(G(\mathbb{R}))_{K_{\infty}}^{K_{\infty}}\otimes \bar{\mathcal{H}}(G(K_{S^{'}}^{'}\backslash \mathbb{Q}_{S'}/K_{S^{'}}^{'}))$. Assume that: 
\begin{eqnarray}\label{9}
\text{Ind}(\omega_{S},\nu_{S})(\Phi)=0,
\end{eqnarray}
for all $i$, for all parabolic $P_{S} \subset Q_{i,S}$ whose Archimedean part is the chosen minimal parabolic subgroup, for all $\omega_{S}$, equivalence classes of unitary irreducible representation of $M(\mathbb{Q}_{S})$ whose Archimedean component is discrete series and non-Archimedean components are cuspidal representations, such that $\omega_{\infty} \subset \tau|_{K_{\infty}\cap M_{\infty}}$, for all $\nu_{S} \in \mathfrak{a}_{P_{0,S},\mathbb{C}}^*$ satisfying $\nu_{S}\lvert_{\Gamma_{A_{i,S}}}=1$,
then $\Phi$ satisfies the following equation for all $i$, for all $k_{1},k_{2} \in K_{S}$ and for all $m \in M_{i,S}$:
\begin{eqnarray}\label{10}
\sum_{\Gamma_{A_{i,S}}}\int_{N_{i,S}}\Phi(k_{1}n\gamma mk_{2}) dn=0.
\end{eqnarray}
\end{lemma}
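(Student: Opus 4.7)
The plan is to combine Fourier analysis on the torus $A_{i,S}/\Gamma_{A_{i,S}}$ with Plancherel inversion on the Levi $M_{i,S}$. Fix $i$ and $k_1,k_2\in K_S$. Writing $m=m^1a_m\in M^1_{i,S}A_{i,S}$ and denoting the constant term
$$F(a,m^1):=\int_{N_{i,S}}\Phi(k_1\,n\,a\,m^1\,k_2)\,dn,\qquad a\in A_{i,S},\;m^1\in M^1_{i,S},$$
the desired identity reads $\sum_{\gamma\in\Gamma_{A_{i,S}}}F(\gamma a_m,m^1)=0$. The choice of congruence subgroup $\Gamma$ makes $A_{i,S}/\Gamma_{A_{i,S}}$ compact, so the $\Gamma_{A_{i,S}}$-average is a smooth function on a compact torus whose vanishing is equivalent to the vanishing of every Fourier coefficient, indexed precisely by the characters $\nu_S^Q\in\mathfrak{a}^*_{Q_i,S,\mathbb{C}}$ trivial on $\Gamma_{A_{i,S}}$.

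Next I would identify each Fourier coefficient
$$\widehat F(\nu_S^Q;m^1)=\int_{A_{i,S}}F(a,m^1)\,e^{-\langle\nu_S^Q,H(a)\rangle}\,da$$
with a matrix coefficient of an induced-representation operator. For any parabolic ${}^*P_S\subset M_{i,S}$ with ${}^*P_\infty$ minimal in $M_{i,\infty}$, Harish-Chandra's correspondence (Lemma~2) lifts ${}^*P_S$ to a parabolic $P_S\subset Q_{i,S}$ with $P_\infty$ minimal in $G_\infty$, and induction in stages gives
$$\mathrm{Ind}_{P_S}^{G_S}(\omega_S,\nu_S^P+\nu_S^Q)\;\cong\;\mathrm{Ind}_{Q_{i,S}}^{G_S}\!\bigl(\mathrm{Ind}_{{}^*P_S}^{M_{i,S}}(\omega_S,\nu_S^P)\otimes\nu_S^Q\bigr).$$
Using the Iwasawa realization of $\mathrm{Ind}_{P_S}^{G_S}(\omega_S,\nu_S)$, the matrix coefficient $\langle\mathrm{Ind}_{P_S}^{G_S}(\omega_S,\nu_S)(\Phi)f_1,f_2\rangle$ against vectors $f_1,f_2$ concentrated at $k_1,k_2\in K_S$ unfolds to $\widehat F(\nu_S^Q;\cdot)$ paired with the character of $\mathrm{Ind}_{{}^*P_S}^{M_{i,S}}(\omega_S,\nu_S^P)$. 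Since $\Phi$ is bi-$K_\infty$-finite of type $\tau$ and bi-$K'_{S'}$-invariant, only $(\omega_S,\nu_S^P)$ with $\omega_\infty\subset\tau|_{K_\infty\cap M_\infty}$, $\omega_{S'}$ cuspidal, and admitting $K'_{S'}$-fixed vectors contribute, exactly matching the parameters allowed by the hypothesis.

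To close the argument, I would invoke Plancherel inversion on $M_{i,S}$: at infinity via Harish-Chandra's vanishing lemma (Lemma~1 above), applied to $F(\cdot,m^1;\nu_S^Q)$ viewed as a $\tau_{M_\infty}$-spherical Schwartz function on $M_{i,\infty}$; at finite places via Arthur's $p$-adic Paley--Wiener theorem. Together these guarantee that a smooth, $\tau$-spherical, bi-$K'_{S'}$-invariant function on $M_{i,S}$ is recovered by its induced-representation operators as $({}^*P_S,\omega_S,\nu_S^P)$ range over the admissible parameters. The hypothesis forces every such operator to vanish, hence $\widehat F(\nu_S^Q;m^1)=0$ for every $m^1\in M^1_{i,S}$ and every $\nu_S^Q$ trivial on $\Gamma_{A_{i,S}}$, which yields the conclusion.

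The main obstacle is this last Plancherel inversion step: one must verify that the restricted family of parameters in the hypothesis (in particular the branching constraint $\omega_\infty\subset\tau|_{K_\infty\cap M_\infty}$) is still rich enough to detect the full $\tau$-isotypic part of $F$, and that Harish-Chandra's vanishing lemma can be applied to the constant term $F$ rather than to $\Phi$ itself. The mixed Archimedean/$p$-adic product structure of $M_{i,S}=M_{i,\infty}\times\prod_{p\in S'}M_{i,p}$ adds an extra layer of bookkeeping, and care is needed to align the $p$-adic Paley--Wiener description of the bi-$K'_{S'}$-invariant Hecke algebra with the Archimedean $\tau$-spherical Schwartz space used in Lemma~1.
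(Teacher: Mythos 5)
Your proposal follows essentially the same route as the paper: unfold the vanishing operator $\mathrm{Ind}_{P_S}^{G_S}(\omega_S,\nu_S)(\Phi)$ through the Harish-Chandra correspondence $P_S\subset Q_{i,S}\leftrightarrow {}^{*}P_S\subset M_{i,S}$ (induction in stages), treat the central direction by Fourier analysis on the compact quotient $A_{i,S}/\Gamma_{A_{i,S}}$ using exactly the characters $\nu_S$ trivial on $\Gamma_{A_{i,S}}$, and finish by injectivity of the spectral transform on the Levi (Harish-Chandra's vanishing statement at the Archimedean place, Bernstein/BDK-type trace Paley--Wiener at the finite places, where the paper cites Bernstein, BDK and Ciubotaru--He rather than Arthur). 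The points you flag as obstacles (the branching constraint $\omega_\infty\subset\tau|_{K_\infty\cap M_\infty}$ and applying the vanishing lemma to the constant term rather than to $\Phi$) are handled at the same level of detail in the paper's own proof, so your plan matches it in both structure and ingredients.
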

\begin{proof}
The equation \eqref{9} implies
\begin{equation*}
\int\limits_{M(\mathbb{Q}_{S})}\int\limits_{N(\mathbb{Q}_{S})}\int\limits_{{K_{S}}}\Phi(k_{1}mnk_{2})\omega_{S}(m^{-1})e^{(-\nu_{S}+\rho_{S})H_{P_{S}}(m^{-1})}dk_{2}dndm =0,
\end{equation*}
for all $k_{1} \in K_{S}$, for all $P(\mathbb{Q}_{S})=P_{S}\subset Q_{i,S}=Q_{i}(\mathbb{Q}_{S})$, and for all $\nu_{S} \in \mathfrak{a}_{P_{0,S},\mathbb{C}}^{*}$ such that $\nu_{S}|_{\Gamma_{A_{i,S}}}=1$. (we will suppress the iteration i in our discussion that follows)
Hence we can drop the integration on $K_{S}$ to get:
\begin{equation*}
\int\limits_{M(\mathbb{Q}_{S})}\int\limits_{N(\mathbb{Q}_{S})}\Phi(k_{1}mnk_{2})\omega_{S}(m^{-1})e^{(-\nu_{S}+\rho_{S})H_{P_{S}}(m^{-1})}dndm =0.
\end{equation*}
Breaking the group $N(\mathbb{Q}_{S})$ as product of $N_{Q}^{P}(\mathbb{Q}_{S})$ and $N_{Q}(\mathbb{Q}_{S})$ we have the following:
\begin{equation}\label{11}
\int\limits_{M(\mathbb{Q}_{S})}\int\limits_{N_{Q}^{P}(\mathbb{Q}_{S})}\int\limits_{N_{Q}(\mathbb{Q}_{S})}\Phi(k_{1}mn'n''k_{2})\omega_{S}(m^{-1}) e^{(-\nu_{S}+\rho_{S})H_{P_{S}}(m)}
dn''dn'dm=0.
\end{equation}

As $^{*}P_{Q}(\mathbb{Q}_{S})=M(\mathbb{Q}_{S})N_{Q}^{P}(\mathbb{Q}_{S}) \subset M_{Q,S}$, is a parabolic subgroup of $M_{Q,S}$, 
we have $\Gamma_{A,S} \subset M(\mathbb{Q}_{S})$ and $\Gamma_{A,S}$ centralizes $M(\mathbb{Q}_{S})$. Hence \eqref{11} implies
\begin{equation}\label{12}
\begin{aligned}
\int\limits_{\frac{M(\mathbb{Q}_{S})}{\Gamma_{A,S}}}\int\limits_{N_{Q}^{P}(\mathbb{Q}_{S})}\int\limits_{N_{Q}(\mathbb{Q}_{S})}\sum\limits_{\Gamma_{A,S}}\Phi(k_{1}m_{1}\gamma n'n''k_{2})\omega_{S}(m_{1}^{-1})\\e^{(-\nu_{S}+\rho_{S})H_{P_{S}}(m_{1})}dn''dn'dm_{1}=0.
\end{aligned}
\end{equation}
We will apply Fubini's theorem to change the order of the integral on $N_{Q}(\mathbb{Q}_{S})$ and sum on $\Gamma_{A,S}$ (as $\Phi$ is compactly supported the integral above is convergent). Moreover we can break down the set $\frac{M(\mathbb{Q}_{S})}{\Gamma_{A,S}}$ into product of $\frac{M(\mathbb{Q}_{S})}{A(\mathbb{Q}_S)}$ and $\frac{A(\mathbb{Q}_S)}{\Gamma_{A,S}}$. Hence we can think of the above integral as the Fourier transform of the following integral:
$$\int\limits_{\frac{M(\mathbb{Q}_{S})}{A(\mathbb{Q}_S)}}\int\limits_{N_{Q}^{P}(\mathbb{Q}_{S})}\int\limits_{N_{Q}(\mathbb{Q}_{S})}\sum\limits_{\Gamma_{A,S}}\Phi(k_{1}m_{1}\gamma n'n''k_{2})f_{S}(m_{1})dn''dn'dm_{1},$$
where $f_{S}$ is the product of coefficient of discrete series (at the Archimedean place) and cuspidal representations (at the non-Archimedean places) of $M(\mathbb{Q}_{S})$.
Therefore, using the injectivity of Fourier transform on functions defined on $M_{Q,S}$ which are compactly supported modulo the central direction of $M_{Q,S}$ (which holds for non-Archimedean case by combining \cite[Th. 25]{B}, \cite{BDK} and \cite[Sec. 5.7]{CH}), eq. 12 implies
\begin{eqnarray}\label{13}
\sum_{\Gamma_{A_{i,S}}}\int_{N_{i,S}}\Phi(k_{1}n\gamma mk_{2}) dn=0.
\end{eqnarray}
\end{proof}
\begin{lemma}
If $\Phi \in C_{c}^{\infty}(G(\mathbb{R}))_{K_{\infty}}^{K_{\infty}}\otimes \bar{\mathcal{H}}(G(K_{S^{'}}^{'}\backslash \mathbb{Q}_{S'}/K_{S^{'}}^{'}))$ satisfies the \eqref{10} then $\Phi$ maps the elements in $L_{\text{loc}}^{1}(\Gamma \backslash G_{S})$ to $L_{\text{cusp}}^{2}(\Gamma \backslash G_{S})$ as a convolution operator.
\end{lemma}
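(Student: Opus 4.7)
The plan is to verify, for each of the finitely many $\Gamma$-conjugacy class representatives $Q_i = M_{i,S} A_{i,S} N_{i,S}$, that the constant term of $g := R(\Phi)f$ along $Q_i$ vanishes identically; combined with the smoothness of $g$ inherited from $\Phi \in C_c^{\infty}$, standard reduction theory then forces $g$ to be rapidly decreasing on $\Gamma \backslash G_S$, hence to lie in $L^2_{\mathrm{cusp}}$. By Fubini, legitimate since $\Phi$ has compact support, one has the commutation identity
\begin{equation*}
g^{Q_i}(x) := \int_{\Gamma_{N_i} \backslash N_{i,S}} g(nx)\, dn \;=\; R(\Phi)\bigl(f^{Q_i}\bigr)(x),
\end{equation*}
so the task reduces to showing $R(\Phi)(f^{Q_i}) \equiv 0$ on $G_S$ for each $i$ and each $f$.

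After the change of variable $h = xg$, I would apply the Iwasawa decomposition $h = n m a k$ along $Q_{i,S} = N_{i,S} M_{i,S}^1 A_{i,S}$ and use the left $N_{i,S}$-invariance of $f^{Q_i}$ to obtain
\begin{equation*}
R(\Phi)(f^{Q_i})(x) = \int_{K_S}\!\int_{M_{i,S}^1}\!\int_{A_{i,S}} f^{Q_i}(m a k) \Bigl(\int_{N_{i,S}} \Phi(x^{-1} n m a k)\, dn\Bigr) \delta_{Q_i}^{-1}(a)\, da\, dm\, dk.
\end{equation*}
The next step is to fold the $A_{i,S}$-integral against the lattice $\Gamma_{A_{i,S}}$, replacing $\int_{A_{i,S}}$ by $\int_{A_{i,S}/\Gamma_{A_{i,S}}} \sum_{\gamma \in \Gamma_{A_{i,S}}}$. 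This is legal provided that $f^{Q_i}$ and the weighting $\delta_{Q_i}^{-1}$ are both $\Gamma_{A_{i,S}}$-invariant under left translation, which in turn reduces to the single identity $\delta_{Q_i}(\gamma) = 1$ for $\gamma \in \Gamma_{A_{i,S}}$. This identity follows from the product formula applied to the rational character $\delta_{Q_i}$ at $\gamma \in A_{i,S}(\mathbb{Q}) \cap G(\mathbb{Z}[S^{-1}])$, together with the fact that $\gamma$ is a unit at each $p \notin S$.

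The remaining manipulation matches what is left against hypothesis \eqref{10}. I would Iwasawa-decompose $x^{-1} = k_x^{-1} a_x^{-1} m_x^{-1} n_x^{-1}$ with $k_x \in K_S$, substitute $n \mapsto n_x n$ in the $N_{i,S}$-integral, push $a_x^{-1} m_x^{-1}$ past $n$ through the further change of variable $n \mapsto (m_x a_x) n (m_x a_x)^{-1}$ (Jacobian $\delta_{Q_i}^{\pm 1}(m_x a_x)$), and commute $a_x^{-1} m_x^{-1}$ past $\gamma$ using $\gamma \in A_{i,S} \subset Z(M_{i,S})$. The outcome is
\begin{equation*}
\sum_{\gamma \in \Gamma_{A_{i,S}}} \int_{N_{i,S}} \Phi(x^{-1} n \gamma m a k)\, dn \;=\; c(x) \sum_{\gamma} \int_{N_{i,S}} \Phi\bigl(k_x^{-1}\, n\, \gamma\, \tilde m\, k\bigr)\, dn,
\end{equation*}
with $\tilde m := a_x^{-1} m_x^{-1} m a \in M_{i,S}$ and both $k_x, k \in K_S$; this is exactly the left-hand side of \eqref{10} with $k_1 = k_x^{-1}$ and $k_2 = k$, hence vanishes.

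The principal obstacle is this final absorption: hypothesis \eqref{10} constrains $k_1, k_2$ to lie in $K_S$, yet $x$ is arbitrary, so the non-$K_S$ part of $x$ must be pushed into the $n$- and $ma$-variables without disturbing the sum over $\gamma$. The two structural inputs that make this possible are the centrality $A_{i,S} \subset Z(M_{i,S})$ (so $\gamma$ freely commutes with any $M_{i,S}$-factors being introduced) and the product-formula identity $\delta_{Q_i}(\gamma)=1$ (so no $\gamma$-dependent weight creeps into the sum). Checking carefully that the manipulation produces exactly the sum-integral of \eqref{10}, rather than a shifted or re-weighted variant, is the most delicate bookkeeping in the argument.
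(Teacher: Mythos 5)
Your reduction is correct and lands on exactly the identity the paper needs, but you organize it differently. The paper (following the second proof of Prop.~3 in \cite{LV}) works with the conjugated function $\Psi_i(g)=\Psi(\delta_i g)$, unfolds the automorphic kernel $\sum_{\gamma\in\Gamma_i}\Phi(y^{-1}\gamma\, n x)$, integrates over $\Gamma_{N_{i,S}}\backslash N_{i,S}$ to get a sum over $\Gamma_{N_{i,S}}\backslash\Gamma_i$, and then regroups that sum into $\Gamma_{A_{i,S}}$-cosets before Iwasawa-decomposing $x$ and $y$ to reach \eqref{10}; you instead commute the constant term past the right-convolution ($g^{Q_i}=R(\Phi)(f^{Q_i})$, legitimate by Fubini and compact support), exploit the left $N_{i,S}$-invariance of $f^{Q_i}$ to integrate out $N$ in the Iwasawa coordinates, and then fold only the $A_{i,S}$-integral against the lattice $\Gamma_{A_{i,S}}$, so the full sum over $\Gamma_i$ never appears. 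What your version buys is that the passage from the $\Gamma_{N_{i,S}}\backslash\Gamma_i$-sum to $\Gamma_{A_{i,S}}$-packets --- the murkiest step in the paper's writeup --- is replaced by a transparent folding argument; what it costs is that you must justify the $\Gamma_{A_{i,S}}$-invariance of $f^{Q_i}$ and of the modular weight, which you correctly reduce to $\delta_{Q_i}(\gamma)=1$ via the product formula for $S$-units (the paper hides the same fact in the remark that the modular factor ``only depends on $m_2$'' and can be ignored since $\gamma$ is discrete). Both routes then use centrality of $A_{i,S}$ in $M_{i,S}$ and a $\gamma$-independent Jacobian to absorb the non-$K_S$ part of $x$ into the $m$-slot of \eqref{10}, exactly as you describe. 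Two small points you gloss over: the constant term over $\Gamma_{N_{i,S}}\backslash N_{i,S}$ only makes sense after replacing $f$ by $f(\delta_i\,\cdot\,)$, which is left $\Gamma_i$-invariant (the paper's $\Psi_i$); and your final appeal to rapid decay to conclude membership in $L^2_{\text{cusp}}$ is asserted rather than proved for mere $L^1_{\text{loc}}$ input --- though the paper's own proof likewise only establishes the vanishing of the constant terms, so this does not put you behind it.
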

\begin{proof}
We fix $1\leq i \leq r$ and let $\Psi \in L^2(\Gamma \backslash G_{S})_{K_{\infty}}$ and  $\Psi_{i}(g)=\Psi(\delta_{i}g)$. Then it is easy to see that $\Psi_{i} \in L^2(\Gamma_{i} \backslash G_{S})_{K_{\infty}}$. To get the purely cuspidal image we need the following:\\
For $\Phi=\Phi_{\infty}\Phi_{S\backslash \infty}$, where $\Phi_{\infty}\in C_{c}^{\infty}(G(\mathbb{R}))_{K_{\infty}}^{K_{\infty}}$, $\Phi_{S\backslash \infty}\in \bar{\mathcal{H}}(G(K_{S^{'}}^{'}\backslash \mathbb{Q}_{S'}/K_{S^{'}}^{'}))$ and $\Psi_{i} \in L^2(\Gamma_{i} \backslash G_{S})_{K_{\infty}}$,
$$\int_{\Gamma_{N_{i,S}}\backslash N_{i,S}} \Psi_{i} \star \Phi(nx) dn =0.$$ This is equivalent to:
$$\int_{\Gamma_{N_{i,S}} \backslash N_{i,S}}\int_{G_S} \Phi(y^{-1}nx) \Psi_{i}(y) dy dn=0.$$
Now we break down the integral on $G_S$ as an integral on $\Gamma_{i} \backslash G_S$ and a discrete sum on $\Gamma_{i}$ to get the equivalent relation:
$$\int\limits_{\Gamma_{N_{i,S}} \backslash N_{i,S}}\sum_{\Gamma_{i}}\int_{\Gamma_{i}\backslash G_S} \Phi(y^{-1}\gamma^{-1} nx)\Psi(y) dy dn=0,$$ for all $x,y$. This is equivalent to:
$$\int\limits_{\Gamma_{N_{i,S}} \backslash N_{i,S}}\sum_{\Gamma_{i}}\Phi(y^{-1}\gamma^{-1}nx) dn=0.$$ 
After swapping the integral and the sum (as the sum over $\Gamma_{i}$ is locally finite, and $\Phi$ is compactly supported, we can use the Monotone Convergence Theorem or Dominated Convergence Theorem) and rearranging the domains, we get a sum over $(\Gamma_{i} \bigcap N_{i,S}) \backslash \Gamma_{i}$ and an integral on $N_{i,S}$ 
Therefore, we arrive at the equivalent condition: 
\begin{eqnarray}\label{14}
\sum_{\Gamma_{N_{i,S}} \backslash \Gamma_{i}}\int_{N_{i,S}} \Phi(y^{-1}\gamma nx) dn=0.
\end{eqnarray}
We replace $x$ and $y$ with their respective Iwasawa decompositions, i.e. $x=m_{1}n_{1}k_{1}$ and $y=m_{2}n_{2}k_{2}$. Moreover, we can write the sum over $\frac{\Gamma}{\Gamma \bigcap N_{i,S}}$ as sum over ${\Gamma \bigcap A_{i,S}}=\Gamma_{A_{i,S}}$ cosets. Therefore as $M_{i,S}$ centralizes $A_{i,S}$, we can write $k_{2}^{-1}n_{2}^{-1}m_{2}^{-1}\gamma n m_{1}n_{1}k_{1}=k_{2}^{-1}n_{2}^{-1}\gamma m_{2}^{-1} n m_{1}n_{1}k_{1}$. As $M_{i,S}$ normalizes $N_{i,S}$, with a modular factor we have $$k_{2}^{-1}n_{2}^{-1}\gamma m_{2}^{-1}n m_{1}n_{1}k_{1}=k_{2}^{-1}n_{2}^{-1}n'\gamma m_{2}^{-1}  m_{1}n_{1}k_{1}.$$ But as the modular factor, a scalar, only depends on $m_{2}$, and $\gamma$ belongs to a discrete subgroup, we can ignore that above. Therefore we can finally write the argument of $\Phi$ as $k_{2}^{-1}n_{2}^{-1}n'n''\gamma m_{2}^{-1} m_{1}k_{1}=k_{2}n\gamma m k_{2}$. Hence, we can rewrite the condition in \eqref{14} as follows: For all $k_{1},k_{2} \in K_{S}$ and for all $m\in M_{i,S}$ \cite[(4.6)]{LV}\\
\begin{eqnarray}\label{15}
\sum_{\Gamma_{A_{i,S}}}\int_{N_{i,S}}\Phi(k_{1}n\gamma mk_{2}) dn=0.
\end{eqnarray}
Consequently \eqref{10} is the sufficient condition.
\end{proof}
In the next step we need to find a non-zero combined test functions on $G_{S}$, which is bi-$K_{\infty}$-finite,$K_{\infty}$-central and compactly supported at the Archimedean place and a function from the Hecke algebra at the non-Archimedean places that satisfies the above conditions. For parabolic subgroups $P_{S}$, whose Archimedean component is the chosen standard minimal parabolic subgroup, \eqref{9} would become as follows: For all $\nu_{S} \in \mathfrak{a}_{P_{S}}^* \supset \mathfrak{a}_{Q_{i,S}}^*$ such that whenever for all i, $\nu_{S}|_{\Gamma_{A_{i,S}}}=1$, we have
\begin{eqnarray}\label{16}
\text{Ind}_{P_{S}}^{G_{S}}(\omega_{S},\nu_{S})(\Phi)=0,
\end{eqnarray}
for all $\omega_{S}$ discrete series representation of $M_{P,S}$ such that $\tau|_{M_{0,\infty}} \supset \omega_{\infty}$. By the description of arithmetic tori \cite[Thm. 5.12]{PR}, $\nu_{S}$ should have the property that $\nu_{p}=\nu_{q}$, for all $p,q \in S$. Let $\mathfrak{Z}(G(\mathbb{Q}_{p}))$ be the ring of regular functions defined on union of Benrstein components $\Omega(G(\mathbb{Q}_{p}))$ (\cite[2.3.1]{MT}). Combining for all $p \in S\backslash \infty$ we define $\mathfrak{Z}(\mathbb{Q}_{S\backslash \infty})$ to be the set of Fourier transform of Bernstein center for $G(\mathbb{Q}_{S\backslash \infty})$. Let $z_{i}$ be the elements in the Bernstein center for each $Q_{i}(\mathbb{Q}_{S\backslash \infty})$ \cite[2.2.1]{MT}. Let $\hat{z_{i}} \in \mathfrak{Z}(G(\mathbb{Q}_{p}))$. We can form the test function
$$\Phi=\Phi_{\infty}\prod_{i}(z_{i}\star \mathbbm{1}_{K'}).$$
We have to find a regular function $R$ defined on $\Omega(G(\mathbb{Q}_{S\backslash \infty}))$ such that $R(\nu_{S\backslash \infty})=0$, whenever $\nu_{p}=\nu_{q}$, for all $p,q \in S$ and $\nu_{p} \in \mathfrak{a}_{P_{S}}^*$. We can find a polynomial that satisfies this property for every $P_{S} \subset Q_{i,S}$. 
Hence, we could apply the Arthur's Paley-Wiener Theorem at the Archimedean place and matrix Paley-Wiener Theorem for Hecke alegebra by Bernstein \cite[Thm. 25]{B} at the non-archimedean place to construct a non-zero test function $\Phi$.

\section{Partial trace formula}
In this section we write the partial trace formula. 
We choose a test function whose Archimedean component is a $\tau-$spherical function belonging to the convolution algebra $C_{c}^{\infty}(G(\mathbb{R}),\tau,\tau)$, satisfying the identity  $$\phi(k_{1}gk_{2})=\tau(k_2)^{-1}\phi(g)\tau(k_1)^{-1}$$ and a scalar valued function at the non-archimedean places from $\bar{\mathcal{H}}(G(K_{S^{'}}^{'}\backslash \mathbb{Q}_{S'}/K_{S^{'}}^{'}))$, the Hecke algebra. Suppose it also satisfies the condition of cuspidality described in the previous section.  It acts on $\Gamma-$invariant $L^2$ eigensection ($K_{\infty}$-finite,$K_{S'}^{'}$-fixed) $e_{\lambda}(x)$ (orthonormal with respect to the inner product mentioned in the introduction) of the Casimir operator(defined for sections of vector bundles), with the eigenvalue parameter defined as $\lambda$. We define the convolution action as follows:  
\begin{equation*}
\begin{aligned}
e_{\lambda} \star \phi(x)     & =\int_{G_{S}}\phi(y^{-1}x)e_{\lambda}(y) dy\\
                       & = \int_{\Gamma \backslash G_{S}}\sum_{\gamma^{-1}\in \Gamma} \phi(y^{-1}\gamma^{-1}x)e_{\lambda}(y) dy\\
                       & = \int_{\Gamma \backslash G_{S}} K(x,y)e_{\lambda}(y) dy\\
                       & = \int_{\Gamma \backslash G_{S} /K_{\infty}} K(x,y)e_{\lambda}(y) dy
\end{aligned}    
\end{equation*}
In the last equation we have used the fact that $K(x,y)e_{\lambda}(y)$ is $K_{\infty}-$invariant on $y$.
Then the spectral expansion of $K(x,y)$ can be written as follows:
$$K(x,y)=\sum_{\lambda,\mu}(e_{\lambda} \star \phi, e_{\mu})e_{\mu}(x)\otimes e_{\lambda}(y)^*,$$
where $e_{\mu}(y)^*$ denotes the dual vector which acts on $f(y)$ through the pairing $\langle f(y),e_{\mu}(y)\rangle_{V_{\tau}}$ on the fiber $(E_{\tau})_{y}$ \cite[ (7.3)]{D}.
If we let $x=y$, then the spectral side will have the following form:\\
$$K(x,x)=\sum_{\lambda,\mu}(e_{\lambda} \star \phi, e_{\mu})e_{\mu}(x)\otimes e_{\lambda}(x)^*.$$
Therefore, taking the trace on both sides we get
$$\text{Tr}K(x,x)=\sum_{\lambda}(e_{\lambda} \star \phi, e_{\lambda})e_{\lambda}(x)\otimes e_{\lambda}(x)^*.$$
Consider a compact subset $\Omega \subset \Gamma \backslash G_{S}$, whose measure is arbitrarily close to Vol$(\Gamma \backslash G_{S})$. We take the pre-image of $\Omega$ in $G_{S}$ and call it $\Tilde{\Omega}$.
Unwinding the sum on the left hand side we get\\
$$TrK(x,x)=Tr\phi(e)+\sum_{\gamma \in Z}\text{Tr}\phi(x^{-1}\gamma x),$$ 
The set $Z$ will have the following form :$$Z= \Big( \Gamma \backslash \{e\}\Big)\bigcup \Big(xgx^{-1}: x \in \Tilde{\Omega}, \text {x lies in support of Tr}(\phi)\Big).$$ 
The cardinality of $Z$ would be finite, and would depend only on $\Tilde{\Omega}$ and the support of \text{Tr}$(\phi)$.\\
Integrating both sides over $\Tilde{\Omega}$, we obtain the following:
$$\int_{\Tilde{\Omega}} TrK(x,x) dx  \leq \int_{\Gamma \backslash G_{S}/K_{\infty}} TrK(x,x) dx=\sum_{\lambda}(e_{\lambda} \star \phi, e_{\lambda})$$
To make sure we have a self adjoint convolution operator we need $\phi(x)=\overline{\phi(x^{-1})^T}$. To achieve the self-adjointness we replace $\phi$ with $\phi \star \widetilde{\phi}$, where $\widetilde{\phi}(x)=\overline{\phi(x^{-1})^T}$. Hence, the right hand side of the above inequality becomes $\sum_{\lambda}(e_{\lambda} \star \phi, e_{\lambda} \star \phi)$.\\
Now by a theorem of Gelfand, Graev and Piatetski-Shapiro \cite[Prop. 3.2.3]{bump} which states that the convolution operator on  the scalar-valued automorphic forms is a compact operator. Therefore we obtain:
$\sum_{\lambda}(e_{\lambda} \star \phi, e_{\lambda} \star \phi) < \infty$.
Hence, we have
\begin{eqnarray}\label{17}
\text{Tr}(\phi\star \widetilde{\phi}(e))\text{Vol}(\Omega)+\sum_{\gamma \in Z}\int_{\Omega}\text{Tr}(\phi\star \Tilde{\phi})(x^{-1}\gamma x) \leq \sum\limits_{\lambda}(e_{\lambda} \star \phi, e_{\lambda} \star \phi).
\end{eqnarray}
We now give a representation theoretic interpretation of $\sum_{\lambda}(e_{\lambda} \star \phi, e_{\lambda} \star \phi)$. Let $\pi_{\infty} \in \Pi_{\text{cusp}}(G(\mathbb{R}),\tau)$ be the Archimedean part of irreducible unitary representation $\pi_{S}$ which appears as a subrepresentation of right regular representation of $G(\mathbb{Q}_{S})$ on $L^2_{\text{cusp}}(\Gamma \backslash G(\mathbb{Q}_{S}),\tau)$ with multiplicities $m(\pi_{\infty})$, and let $H_{\pi_{\infty}}$ be the corresponding Hilbert space. Let $H_{\pi_{\infty}}(\tau)$ be the $\tau-$isotypic subspace. Then using \cite[Thm. 3.3]{BM} we have: 
\begin{eqnarray}\label{18}
\sum_{\lambda}(e_{\lambda} \star \phi, e_{\lambda} \star \phi)=\sum\limits_{\Pi_{\text{cusp}}(G(\mathbb{R}),\tau)}m(\pi_{\infty})\bigg(\sum\limits_{i=1}^{m}(e_{i}\star \phi,e_{i}\star \phi)\bigg),
\end{eqnarray}
where $m=\text{dim}(\text{Hom}_{K_{\infty}}(\mathcal{H}_{\pi_{\infty}}(\tau),V_{\tau}))$.

\section{An approximation lemma}
In this section we find a family of test functions $$H_{S,t}=H_{\infty,t}\cdot\mathbbm{1}_{K'},$$ for $0\leq t<1$ that satisfy certain approximations. For the rest of the section and beyond we will write $S\backslash \infty=S'$. Here $K'=K_{S\backslash \infty}^{'}$. We prove a slight generalization of \cite[Lemmma 2]{LV} below.\\
Let $\mathfrak{h}=i\mathfrak{h}_{K_{\infty}}+\mathfrak{a}_{0,\infty}$ be the Cartan subalgebra of $\mathfrak{U(\mathfrak{g_{\infty}})}$. Let $\mathfrak{h}_{\mathbb{C}}=\mathfrak{h}\otimes \mathbb{C}$ be the complexification of the Cartan subalgebra. Let $\mathfrak{h}_{\mathbb{C}}^*$ be the dual of the Cartan subalgebra. We fix $0<\epsilon <1$. By \cite[(5.12)-(5.15)]{LV} we know there exist a non-empty open set of Schwarz functions $\psi$ defined on cylinders
$$\{\lambda_{\infty} \in \mathfrak{h}_{\mathbb{C}}^*: \lvert \text{Re}(\lambda_{\infty})\rvert \leq a\}$$ 
that satisfy the following conditions:
\begin{itemize}
    \item $0\leq \psi(\lambda_{\infty}) < 1, \quad \text{when} \quad \lVert \lambda_{\infty} \rVert \leq 1, \lambda_{\infty} \in \mathfrak{h}_{K_{\infty}}^*+i\mathfrak{a}_{0,\infty}^*$.
    \item $ \int\limits_{i\mathfrak{a}_{0,\infty}^*} \left|\psi(\nu_{K_{\infty}}+\nu_{\infty})-\chi(\nu_{K_{\infty}}+\nu_{\infty})\right|(1+\lVert \nu_{\infty} \rVert)^{\text{dim}(N_{0})}d\nu_{\infty} \leq \epsilon$, for fixed $\nu_{K_{\infty}}\in \mathfrak{h}_{K_{\infty},\mathbb{C}}^*$ such that $\text{Re}(\nu_{K_{\infty}})$ is bounded.
    \item  $\sup\limits_{\lVert \lambda_{\infty} \rVert >1}(1+\lVert \lambda_{\infty} \rVert)^{d+1}\left|\psi(\lambda_{\infty})\right| \leq \epsilon$.
\end{itemize}
Here $\chi(\nu_{K_{\infty}}+\nu_{\infty})$ denotes the characteristic function of the sphere $\lVert \nu_{K_{\infty}}+\nu_{\infty} \rVert \leq 1$. Without loss of generality we may assume that $\psi$ can be extended to a holomorphic function on $\mathfrak{h}_{\mathbb{C}}^{*}$, as the Fourier transform of $\psi$ has compact support. Let $\mathbb{O}_{\mathbb{C}}$ be the orthogonal group of $\mathfrak{h}_{\mathbb{C}}^*$ with respect to the inner product $\langle,\rangle$. This inner product is induced from the Killing form on $\mathfrak{h}_{\mathbb{C}}$. By averaging we can make $\psi$ as $\mathbb{O}_{\mathbb{C}}-$ invariant function. Therefore $\psi(\lambda_{\infty})$ depends only on $\langle \lambda_{\infty},\lambda_{\infty}\rangle$. Let $d_{\omega_{\infty}}$ denote the degree of equivalent classes of square integrable irreducible representations $\omega_{\infty}$ of $M_{0,\infty}$. Using Frobenius Reciprocity we see that for the case of minimal parabolic $M_{0,\infty}$, we have  $$[\text{Ind}(\omega_{\infty},\nu_{\infty})|_{K_{\infty}}:\tau]=[\tau|_{M_{0,\infty}}:\omega_{\infty}].$$ Therefore,  $$\sum\limits_{\omega_{\infty}\in \mathcal{E}_{2}(M_{0,\infty})}d_{\omega_{\infty}}[\tau|_{M_{0,\infty}}:\omega_{\infty}]=\sum\limits_{\omega_{\infty}\in \mathcal{E}_{2}(M_{0,\infty})}d_{\omega_{\infty}}[\text{Ind}(\omega_{\infty},\nu_{\infty})|_{K_{\infty}}:\tau]=d_{\tau}.$$ 
Put $m_{\omega_{\infty}}=[\tau|_{M_{0,\infty}}:\omega_{\infty}]$. Let $C_{c}^{\infty}(\mathfrak{h})$ be the set of compactly supported smooth functions (i.e. set of multipliers for the convolution algebra $C_{c}^{\infty}(G(\mathbb{R}))_{K_{\infty}}^{K_{\infty}}$). Then by the Euclidean Paley-Wiener Theorem there exists $\zeta \in C_{c}^{\infty}(\mathfrak{h})$ such that its Laplace-Fourier transform $\hat{\zeta}(\lambda_{\infty})$ satisfies:
$$\hat{\zeta}(\lambda_{\infty})=\psi(\lambda_{\infty}), \quad \forall \lambda_{\infty} \in \mathfrak{h}_{\mathbb{C}}^*.$$
There exists a function $H_{\infty}^{\sharp} \in C_{c}^{\infty}(G(\mathbb{R}))_{K_{\infty}}^{K_{\infty}}$ such that $\text{Ind}(\omega_{\infty},\nu_{\infty})(H_{\infty}^{\sharp})= \text{Ind}(\omega_{\infty},\nu_{\infty})(d_{\tau}\chi_{\tau})$ \cite[Lemma 1.3.2]{GV}. Now using the Arthur's theorem on multiplier we can choose a family of functions  $H_{\infty,t,\zeta}^{\sharp} \in C_{c}^{\infty}(G(\mathbb{R}))_{K_{\infty}}^{K_{\infty}}$, such that their operator valued Fourier transforms are $$\text{Ind}(\omega_{\infty},\nu_{\infty})(H_{\infty,t,\zeta})=\hat{\zeta}(\nu_{\omega_{\infty}}+t\nu_{\infty})\text{Ind}(\omega_{\infty},\nu_{\infty})(d_{\tau}\chi_{\tau}),$$ for $0< t\leq1$. Let $H_{S,t,\zeta}^{\sharp}=H_{\infty,t,\zeta}^{\sharp} \cdot\mathbbm{1}_{K'}$, for $K'$ an arbitrarily chosen open compact subgroup of $G_{S'}$. Then 
\begin{eqnarray*}
\left|\left|\widehat{H_{\infty,t,\zeta}^{\sharp}}(\omega_{\infty},\nu_{\infty})\right|\right|^2
=\int\limits_{K_{\infty}}\left|\widehat{H_{\infty,t,\zeta}^{\sharp}}(\omega,\nu)(1:1:k)\right|^2dk=d_{\omega}\left|\left|\text{Ind}(\omega_{\infty},\nu_{\infty})(H_{\infty,t,\zeta}^{\sharp})\right|\right|_{\text{HS}}^{2}.
\end{eqnarray*}
Therefore, from the above choice of Schwartz function we have $$\left|\left|\widehat{H_{\infty,t,\zeta}^{\sharp}}(\omega_{\infty},\nu_{\infty})\right|\right|^{2}=d_{\tau}d_{\omega}m_{\omega_{\infty}}\lVert\psi(\nu_{\omega_{\infty}}+t\nu_{\infty})\rVert^2.$$ 
The following estimate will be instrumental in proving the main estimate in Weyl's law. Let $0<\epsilon <1$ and choose $H_{\infty,t,\zeta}^{\sharp}$ that depending on $\epsilon$.
\begin{lemma}
There exists $C_{1}>0$ such that for sufficiently small $0< t \leq1$ and for the minimal Parabolic $P_{0,\infty}=M_{0,\infty}A_{0,\infty}N_{0,\infty}$ we have
\begin{eqnarray*}
\left|t^{d}\sum_{\omega \in \mathcal{E}_{2}(M_{0,\infty})} \int_{i\mathfrak{a}_{0,\infty}^*}\left|\left|\widehat{H_{\infty,t,\zeta}^{\sharp}}(\omega_{\infty},\nu_{\infty})\right|\right|^2\mu(\omega_{\infty},\nu_{\infty})d\nu-d_{\tau}^2\alpha(G_{\infty})\right| \leq C_{1}\epsilon.
\end{eqnarray*}
\end{lemma}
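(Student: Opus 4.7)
The plan is to substitute the explicit formula
\[
\bigl\lVert\widehat{H_{\infty,t,\zeta}^{\sharp}}(\omega_{\infty},\nu_{\infty})\bigr\rVert^{2}=d_{\tau}d_{\omega}m_{\omega_{\infty}}\lvert\psi(\nu_{\omega_{\infty}}+t\nu_{\infty})\rvert^{2}
\]
and then compare the resulting integral with $d_\tau^2\alpha(G_\infty)$ in two stages: first replace $\lvert\psi\rvert^2$ by the characteristic function $\chi$ of the unit ball in $\mathfrak{h}^*_{\mathbb{C}}$, and then apply the Plancherel asymptotic \eqref{8}. Only finitely many $\omega\in\mathcal{E}_2(M_{0,\infty})$ satisfy $m_{\omega_\infty}>0$, and for these the infinitesimal characters $\nu_{\omega_\infty}$ lie in a bounded subset of $\mathfrak{h}_{K_\infty,\mathbb{C}}^*$ with bounded real part, so property (ii) of $\psi$ applies uniformly.

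The first stage uses the pointwise bound $\bigl\lvert\lvert\psi\rvert^2-\chi\bigr\rvert\leq 2\lvert\psi-\chi\rvert$. This follows by case analysis: on $\lVert\lambda\rVert\leq 1$, property (i) gives $\chi=1$ and $\lvert\psi\rvert\leq 1$, so $\bigl\lvert 1-\lvert\psi\rvert^2\bigr\rvert=(1+\lvert\psi\rvert)\lvert 1-\lvert\psi\rvert\rvert\leq 2\lvert\psi-\chi\rvert$; on $\lVert\lambda\rVert>1$, property (iii) gives $\chi=0$ and $\lvert\psi\rvert\leq\epsilon\leq 1$, so $\lvert\psi\rvert^2\leq\lvert\psi\rvert=\lvert\psi-\chi\rvert$. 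Combining this with the polynomial envelope \eqref{7}, $\mu(\omega_\infty,\nu_\infty)\leq C'(1+\lVert\nu_\infty\rVert)^{\dim N_0}$, and with property (ii) of $\psi$, the substitution $u=t\nu_\infty$ absorbs the prefactor $t^d=t^{\dim\mathfrak{a}_0+\dim N_0}$ (using $(t+\lVert u\rVert)^{\dim N_0}\leq(1+\lVert u\rVert)^{\dim N_0}$ for $t\leq 1$) to give, for each relevant $\omega$,
\[
t^d\int_{i\mathfrak{a}_{0,\infty}^*}\bigl\lvert\lvert\psi\rvert^2-\chi\bigr\rvert(\nu_{\omega_\infty}+t\nu_\infty)\,\mu(\omega_\infty,\nu_\infty)\,d\nu_\infty\leq 2C'\epsilon.
\]
Summing with the weights $d_\tau d_\omega m_{\omega_\infty}$ and using $\sum_\omega d_\omega m_{\omega_\infty}=d_\tau$, the total replacement error is bounded by $2C'd_\tau^2\epsilon$.

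The second stage exploits the orthogonality $\nu_{\omega_\infty}\perp\nu_\infty$ under the Killing form: the constraint $\chi(\nu_{\omega_\infty}+t\nu_\infty)=1$ becomes $\lVert\nu_\infty\rVert\leq\sqrt{1-\lVert\nu_{\omega_\infty}\rVert^2}/t$, a radius tending to infinity as $t\to 0$. Applying \eqref{8} in its $\tau$-weighted form, where $\sum_\omega d_\omega m_{\omega_\infty}=d_\tau$ collapses the weighted sum into an overall factor of $d_\tau$ and the homogeneity of the leading Plancherel density (Section 3) cancels the prefactor $t^d$ against the $(1/t)^d$ volume growth of the ball, the remaining $\chi$-integral evaluates asymptotically to $d_\tau\alpha(G_\infty)$, producing the desired main term $d_\tau^2\alpha(G_\infty)$ after multiplication by the outer $d_\tau$.

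The principal obstacle is verifying uniformity in $t\in(0,1]$ of the first-stage error: the polynomial factor $(1+\lVert\nu_\infty\rVert)^{\dim N_0}$ from \eqref{7} and the prefactor $t^d$ must interlock exactly via the substitution $u=t\nu_\infty$, which works only because $d=\dim\mathfrak{a}_0+\dim N_0$. A secondary delicacy is calibrating the constant $\alpha(G_\infty)$ through \eqref{8} to produce exactly $d_\tau^2$ in the main term, which requires both the identity $\sum_\omega d_\omega m_{\omega_\infty}=d_\tau$ and the homogeneity of the leading part of $\mu$ established in Section 3.
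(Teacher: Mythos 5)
Your proposal is correct and follows essentially the same route as the paper's proof: substitute $\lVert\widehat{H_{\infty,t,\zeta}^{\sharp}}(\omega_{\infty},\nu_{\infty})\rVert^{2}=d_{\tau}d_{\omega}m_{\omega_{\infty}}\lvert\psi(\nu_{\omega_{\infty}}+t\nu_{\infty})\rvert^{2}$, compare $\lvert\psi\rvert^{2}$ with the cutoff $\chi$ using the polynomial bound on $\mu$ after the rescaling $\nu_{\infty}\mapsto t\nu_{\infty}$, invoke property (ii) of $\psi$ for the $\epsilon$-bound, and let the $\chi$-part yield the main term $d_{\tau}^{2}\alpha(G_{\infty})$ via the Plancherel asymptotics together with $\sum_{\omega}d_{\omega}m_{\omega_{\infty}}=d_{\tau}$. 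Your extra details (the pointwise bound $\bigl\lvert\lvert\psi\rvert^{2}-\chi\bigr\rvert\le 2\lvert\psi-\chi\rvert$, finiteness of the relevant $\omega$, and uniformity in $t$) merely make explicit what the paper leaves implicit.
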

\begin{proof}
Recall the Plancherel inversion formula at the real place\\
\begin{eqnarray}\label{19}
f\star \Tilde{f}(1)= \sum_{\mathcal{P}}n(\mathcal{P})^{-1}\sum_{P\in \mathcal{P}}\sum_{\omega \in \mathcal{E}_{2}(M_{\infty})}(\frac{1}{2\pi i})^{q}\int_{i\mathfrak{a}_{M_{\infty}}^*}\left|\left| \widehat{f}(\omega,\nu)\right|\right|^2 \mu(\omega,\nu)d\nu.
\end{eqnarray}
Here, $\mathcal{P}$ denotes the associated classes pf parabolic subgroups. The integer $q$ denotes the dimension of respective $i\mathfrak{a}_{M_{\infty}}^{*}$. We are interested on the summand that corresponds to the minimal parabolic. For the sum and integral involving the minimal parabolic subgroup $P_{0,\infty} =M_{0,\infty}A_{0,\infty}N_{0,\infty}$, and $\text{dim}(i\mathfrak{a}_{0,\infty}^{*})=r$ we have
$$\limsup_{t \to 0}\left| t^{d}\sum_{\omega_{\infty} \in \mathcal{E}_{2}(M_{0,\infty})}(\frac{1}{2\pi i})^{r}\int_{i\mathfrak{a}_{0,\infty}^*}\lVert\widehat{{H_{\infty,t,\zeta}^{\sharp}}}(\omega_{\infty},\nu_{\infty})\rVert^{2} \mu(\omega_{\infty},\nu_{\infty}) d\nu_{\infty} - d_{\tau}^2\alpha(G_{\infty})\right|$$
$$=\limsup_{t \to 0}\left| t^{d}\sum_{\omega_{\infty} \in \mathcal{E}_{2}(M_{0,\infty})}(\frac{1}{2\pi i})^{r}\int_{i\mathfrak{a}_{0,\infty}^*}\lVert\widehat{{H_{\infty,t,\zeta}^{\sharp}}}(\omega_{\infty},\nu_{\infty})\rVert^{2} \mu(\omega_{\infty},\nu_{\infty}) d\nu_{\infty} - d_{\tau}^2\alpha(G_{\infty})\right|$$
$$\leq \left|\limsup_{t \to 0} t^{d}\sum_{\omega_{\infty} \in \mathcal{E}_{2}(M_{0,\infty})}(\frac{1}{2\pi i})^{r}\int_{i\mathfrak{a}_{0,\infty}^*}d_{\tau}d_{\omega}m_{\omega_{\infty}}\left|\lVert\psi(\nu_{\omega_{\infty}}+t\nu_{\infty})\rVert^{2} -\chi(\nu_{\omega_{\infty}}+t\nu_{\infty})\right|\mu(\omega_{\infty},\nu_{\infty}) d\nu_{\infty}\right|$$
$$\leq \abs*{\limsup_{t \to 0} t^{d}\sum_{\omega_{\infty} \in \mathcal{E}_{2}(M_{0,\infty})}(\frac{1}{2\pi i})^{r}\int_{i\mathfrak{a}_{0,\infty}^*}d_{\tau}d_{\omega}m_{\omega_{\infty}}\left|\lVert\psi(\nu_{\omega_{\infty}}+\nu_{\infty})\rVert^{2} -\chi(\nu_{\omega_{\infty}}+\nu_{\infty})\right|
\mu(\omega_{\infty},t^{-1}\nu_{\infty})d(t^{-1}\nu_{\infty})}$$
$$\leq\left|\sum_{\omega_{\infty} \in \mathcal{E}_{2}(M_{0,\infty})}(\frac{1}{2\pi i})^{r}\int_{i\mathfrak{a}_{0,\infty}^*}d_{\tau}d_{\omega}m_{\omega_{\infty}}\left|\lVert\psi(\nu_{\omega_{\infty}}+\nu_{\infty})\rVert^{2} -\chi(\nu_{\omega_{\infty}}+\nu_{\infty})\right|\Big(1+\lVert\nu_{\infty} \rVert\Big)^{\text{dim}(N_{0})}  d(\nu_{\infty})\right|$$
$$\leq C_{1} \epsilon.$$

In the last step we use the second condition of $\psi$ defined in the beginning of this section.

\end{proof}

\section{Plancherel Inversion and test functions}
In this section we describe the choice of test functions. We start by recalling a result of Camporesi, which identifies the endomorphism valued convolution algebra with scaler valued functions.
\begin{prop*}\cite[Prop 2.1]{Camp}
Let $\tau$ be the irreducible $K_{\infty}$-type as before of dimension $d_{\tau}$. then the endomorphism valued convolution algebra isomorphic to scalar valued bi-$K_{\infty}$-finite, $K_{\infty}$-central function space. The anti-isomorphism is given by the following map
\begin{eqnarray}\label{20}
f \mapsto F=d_{\tau}\text{Tr}(f).
\end{eqnarray}
Moreover it satisfies the following relations:
$$d_{\tau}\text{Tr}(f_{1}\star f_{2})= d_{\tau}\text{Tr}(f_{2})\star d_{\tau}\text{Tr}(f_{1}),$$
$$d_{\tau}\chi_{\tau}\star F= F = F\star d_{\tau}\chi_{\tau}.$$

\end{prop*}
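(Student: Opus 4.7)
The plan is to establish the bijection first and then verify the algebraic compatibilities. First I verify that $F := d_\tau \text{Tr}(f)$ lies in $C_c^\infty(G(\mathbb{R}))_{K_\infty}^{K_\infty}$ whenever $f \in C_c^\infty(G(\mathbb{R}),\tau,\tau)$. The $K_\infty$-centrality $F(kxk^{-1}) = F(x)$ follows from the cyclic property of the trace applied to $f(kxk^{-1}) = \tau(k)f(x)\tau(k^{-1})$. Bi-$K_\infty$-finiteness amounts to checking the defining integral identity with $\Phi = F$; pushing scalars inside the trace, pulling the two $K_\infty$-integrals outside, and applying Schur orthogonality in the form $d_\tau \int \chi_\tau(k^{-1}) \tau(k)\, dk = \text{Id}_{V_\tau}$ collapses the two endomorphisms flanking $f(x)$ to identities. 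The inverse map $\Phi \mapsto f(g) := \int_{K_\infty} \Phi(gk)\tau(k)\, dk$ lands in $C_c^\infty(G(\mathbb{R}),\tau,\tau)$ by invariance of the Haar measure together with the $K_\infty$-centrality of $\Phi$, and the same Schur identity confirms that the two maps are mutually inverse.

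Next I turn to the anti-homomorphism
\[
d_\tau \text{Tr}(f_1 \star f_2) = d_\tau \text{Tr}(f_2) \star d_\tau \text{Tr}(f_1).
\]
Starting from $(f_1 \star f_2)(g) = \int_{G(\mathbb{R})} f_1(h) f_2(h^{-1}g)\, dh$, where the integrand is a matrix product in $\text{End}(V_\tau)$, I take the trace and use cyclicity. Substituting each $f_i$ via the inverse formula produces, inside the integrand, the scalar $\text{Tr}(\tau(k_1)\tau(k_2)) = \chi_\tau(k_1 k_2)$ together with the scalar factors $F_1, F_2$. After Fubini (permitted by compact support) and a change of variable, the two auxiliary $K_\infty$-integrations pair against the character $\chi_\tau$ via Schur orthogonality, collapsing to a single scalar convolution on $G(\mathbb{R})$ with the roles of $F_1$ and $F_2$ reversed. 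This reversal is exactly what the noncommutativity of the matrix product under the trace forces. The identity relations $d_\tau \chi_\tau \star F = F = F \star d_\tau \chi_\tau$ follow directly from the bi-$K_\infty$-finiteness of type $\tau$: convolution against $d_\tau \chi_\tau$ realizes the projection onto the $\tau$-isotypic component for the corresponding regular $K_\infty$-action, and $F$ already lies there.

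The main obstacle I expect is the anti-homomorphism step. Both sides involve convolutions over $G(\mathbb{R})$, but the $\text{End}(V_\tau)$-valued side carries the noncommutativity of matrix multiplication while the scalar side only sees the convolution of ordinary functions; reconciling the two requires careful bookkeeping of which copies of $\tau$ pair against which $K_\infty$-integrations under Schur orthogonality. Once the identification is made, the reversal of the factors on the scalar side is unavoidable, which is exactly why the map is an anti-isomorphism rather than an isomorphism.
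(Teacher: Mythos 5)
The paper itself does not prove this proposition; it is quoted verbatim from Camporesi \cite[Prop.\ 2.1]{Camp}, so there is no internal proof to compare against. Your overall strategy --- recover $f$ from $F=d_\tau\mathrm{Tr}(f)$ by $f(g)=\int_{K_\infty}F(gk)\tau(k)\,dk$ and run everything through Schur orthogonality --- is the standard route and is essentially how the cited result is established. One bookkeeping caution on the first part: the scalar identity $d_\tau\int\chi_\tau(k^{-1})\tau(k)\,dk=\mathrm{Id}$ is not by itself enough to ``collapse the two endomorphisms flanking $f(x)$''; with the transformation law $f(k^{-1}xk')=\tau(k'^{-1})f(x)\tau(k)$ and the kernels as displayed in the paper, the flanking operators are both of the form $d_\tau\int\chi_\tau(u)\tau(u)\,du$, which is \emph{not} the identity (it vanishes unless $\tau\cong\check\tau$); you need either the operator-valued form of Schur orthogonality, $d_\tau\int_{K_\infty}\mathrm{Tr}(\tau(k^{-1})A)\,\tau(k)\,dk=A$ for all $A\in\mathrm{End}(V_\tau)$, or the contragredient character in the kernel. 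This is partly a convention slip in the paper's displayed definition, but your argument should be phrased so that it survives it.

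The genuine gap is in the anti-homomorphism step. Your stated mechanism --- that ``the noncommutativity of the matrix product under the trace forces'' the reversal --- cannot be right, because the trace is cyclic: $\mathrm{Tr}\bigl(f_1(h)f_2(h^{-1}g)\bigr)=\mathrm{Tr}\bigl(f_2(h^{-1}g)f_1(h)\bigr)$, so nothing ``under the trace'' distinguishes the two orders. Indeed, if one takes the naive product $(f_1\star f_2)(g)=\int_G f_1(h)f_2(h^{-1}g)\,dh$ on the endomorphism-valued side and the usual scalar convolution, the same Schur-orthogonality computation you outline produces $d_\tau\mathrm{Tr}(f_1\star f_2)=F_1\star F_2$, i.e.\ a homomorphism with no reversal --- and, worse, that naive product does not even preserve the space $C_c^\infty(G(\mathbb{R}),\tau,\tau)$, since the covariance rule $\phi(k_1gk_2)=\tau(k_2^{-1})\phi(g)\tau(k_1^{-1})$ puts $k_1$ (acting on the left of the argument) on the right of the value. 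The reversal in the proposition comes precisely from the fact that the product on $C_c^\infty(G(\mathbb{R}),\tau,\tau)$ must be the one corresponding to composition of the associated convolution operators on sections, which with this covariance rule is the convolution with the matrix factors in the opposite order. Your sketch never fixes these conventions, so as written it could equally well ``prove'' the isomorphism version; to close the gap you must specify the product on the endomorphism-valued algebra (and on the scalar algebra), check that it preserves the space, and then redo the Fubini/Schur computation with those conventions, at which point the order reversal appears for a structural reason rather than from trace noncommutativity.
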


\subsection{Test Functions}
The following steps will describe our test functions. We closely follow \cite[Lemma 2,3]{LV}.\\
\ding{108} We choose a function $\Phi_{S}^{\sharp} = \Phi_{\infty}^{\sharp}\Phi_{S'}$, where $\Phi_{\infty}^{\sharp} \in C_{c}^{\infty}(G(\mathbb{R}))_{K_{\infty}}^{K_{\infty}}$ and $\Phi_{S'} \in \bar{\mathcal{H}}(G(K_{S^{'}}^{'}\backslash \mathbb{Q}_{S'}/K_{S^{'}}^{'}))$, 
so that the cuspidality condition holds true, i.e. $\Phi_{S}^{\sharp}$ satisfy $(8)$. By the isomorphism in $(20)$ we have a function $\phi_{\infty} \in C_{c}^{\infty}(G(\mathbb{R}),\tau,
\tau)$ so that $d_{\tau}\text{Tr}\phi_{\infty}=\Phi_{\infty}^{\sharp}$. Let $\phi_{S}$ be the product of $\phi_{\infty}$ by $\Phi_{S'} \in \bar{\mathcal{H}}(G(K_{S^{'}}^{'}\backslash \mathbb{Q}_{S'}/K_{S^{'}}^{'}))$.\\
\ding{108} Next we choose a family of functions $$h_{\infty,t,\zeta} \in C_{c}^{\infty}(G(\mathbb{R}),\tau,\tau) \quad \text{for} \quad 0< t \leq 1.$$ From the properties mentioned in the previous section, we can choose an entire Schwartz function $\widehat{H_{\infty,\zeta}}(\omega,\nu)$ that satisfies the Lemma 6. We form a family $h_{\infty,t,\zeta}$ for $0< t \leq1$, so that $d_{\tau}\text{Tr}h_{\infty,t,\zeta}=H_{\infty,t,\zeta}^{\sharp}$. We multiply $h_{\infty,t,\zeta}$ with $\mathbbm{1}_{K'}$, and call this function $h_{S,t,\zeta}$.\\
\ding{108} Finally, choose a sequence $\Phi_{n,S}^{\sharp}$ that satisfies the condition of cuspidality. Let $Z_{\infty} \in C_{c}^{\infty}(\mathfrak{h})$ be an element in the set of Archimedean multipliers. Then by Euclidean Paley-Wiener Theorem, $\hat{Z}_{\infty}$ is bounded on the set $\{\lambda_{\infty} \in \mathfrak{h}_{\mathbb{C}}^*: \text{Im}(\lambda_{\infty})=0\}$. If we choose $\Phi_{S}$ so that it satisfies the condition of cuspidality, then the fourier transform of elements of the Bernstein center at the non-Archimedean places is bounded on the set of unitary unramified characters. Suppose the bound is $B>0$. Following \cite[p. 245-246]{LV} we construct such a sequence. Let
$$P_n(Z_{S})=1-\Big(1-\frac{(Z_{S})^2}{B^2}\Big)^{n}.$$
Let $\Phi_{S}^{\sharp}=Z_{S}\star f_{S}$, where $\lVert\pi_{\infty}(f_{\infty})\rVert_{\text{HS}}^2=1$ and $f_{S\backslash \infty}=\mathbbm{1}_{K'}$. As the multipliers on the space of bi-$K_{\infty}-$finite compactly supported smooth functions on $G(\mathbb{R})$ and multipliers on space of locally constant functions on $G(\mathbb{Q}_{S'})$ are equipped with convolution, (thought of as a multiplication) we can define a sequence $\Phi_{n,S}^{\sharp}= P_{n}(Z_{S})\star f_{S}\star \widetilde{f_{S}}$. $P_{n}$ will satisfy the following properties
\begin{itemize}
    \item $P_{n}(0)=0$
    \item $\text{Ind}(\omega_{S},\nu_{S})(\Phi_{n,S}^{\sharp})=P_{n}(\hat{Z}_{S})(\text{Ind}(\omega_{S},\nu_{S})(f_{S})\text{Ind}(\omega_{S},\nu_{S})(f_{S})^*).$\\ 
    Notice that here multiplication of Fourier transforms are defined as \cite[part II,pp. 1.1 ]{Ar1}. Therefore $\Phi_{n,S}^{\sharp}$ will satisfy \eqref{9}.
\end{itemize}    
Let $\phi_{n,S}$ be the endomorphism valued test functions corresponding to $\Phi_{n,S}^{\sharp}$ as $\phi_{n,S}$. We apply the partial trace formula on the family of test functions $\phi_{n,S}\star h_{S,t,\zeta}$. Write $\phi_{n,S,t,\zeta}=\phi_{n,S}\star h_{S,t,\zeta}$. We obtain
\begin{eqnarray}\nonumber
&d_{\tau}\text{Tr}((\phi_{n,S,t,\zeta}\star \widetilde{\phi_{n,S,t,\zeta}})(e))\text{Vol}(\Omega)+d_{\tau}\sum_{\gamma \in Z}\int_{\Omega}\text{Tr}(\phi_{n,S,t,\zeta}\star\widetilde {\phi_{n,t,S}})(x^{-1}\gamma x)\\\nonumber                   &\leq d_{\tau}\sum\limits_{\lambda}(e_{\lambda} \star \phi_{n,S,t,\zeta}\star \widetilde{\phi_{n,S,t,\zeta}}, e_{\nu})\\\label{21}
&=d_{\tau}\sum\limits_{\lambda}\sum\limits_{-\nu_{\pi_{\infty}}=\lambda}m(\pi)\text{Tr}(\pi(\text{Tr}(\phi_{n,,S,t,\zeta}\star \widetilde{\phi_{n,S,t,\zeta}}))).
\end{eqnarray}

\subsection{Plancherel Inversion}
We now recall the Plancherel Theorem at Archimedean place as in \cite[part II, (2.1)]{Ar1} \\
\begin{eqnarray*}
&\widetilde{\Phi_{n,\infty,t,\zeta}^{\sharp}}\star \Phi_{n,\infty,t,\zeta}^{\sharp} (e)\widetilde{\Phi_{n,S'}}\star \Phi_{n,S'}(e)=\sum_{\mathcal{P} \in \text{Cl}(G_{\infty})}n(\mathcal{P})^{-1}\sum_{P \in \mathcal{P}}\sum_{\omega_{\infty} \in \mathcal{E}_2(M_{\infty})}(\frac{1}{2\pi i})^{q}\\
&\int_{i\mathfrak{a_{\infty}^*}}\left|\left|\text{Ind}(\omega_{\infty},\nu_{\infty})(\Phi_{n,\infty,t,\zeta}^{\sharp})\right|\right|_{\text{HS}}^2\mu(\omega_{\infty},\nu_{\infty}) d\nu_{\infty}\widetilde{\Phi_{n,S'}}\star \Phi_{n,S'}(e).
\end{eqnarray*}
We have the following convergences as $n\to \infty$
$$\text{Ind}(\omega_{S'})(\Phi_{n,S'}) \to \text{Ind}(\omega_{S'})(\mathbbm{1}_{K'}), \quad \text{and} \quad \lVert\text{Ind}(\omega_{\infty},\nu_{\infty})(\Phi_{n,\infty}^{\sharp})\rVert \to 1.$$
Now if we take the limit inside the norm (due to continuity) and inside the Fourier transformation (due to isometry)\cite[p. 4719]{Ar2} the above integrand converges to
$$\left|\left|\widehat{H_{\infty,t,\zeta}^{\sharp}}(\omega_{\infty},\nu_{\infty})\right|\right|^2\mu(\omega_{\infty},\nu_{\infty}).$$
Therefore, we see that integrand corresponding to the minimal parabolic summand in the Plancherel Formula can be divided into two sets $X=\{\nu \in i\mathfrak{a}_{0,\infty}^*:P_{n}(\hat{Z}_{S})\leq \epsilon \}$ and its complement $X^c$. As we take $\lim{\epsilon \to 0}$, the set $X$ will have measure 0, and on $X^c$ the integrand will become $\lVert \widehat{H_{\infty,t,\zeta}^{\sharp}}(\omega_{\infty},\nu_{\infty})\rVert^2\mu(\omega_{\infty},\nu_{\infty})$.
From the discussion above, it is clear that the following estimate will be enough for us to arrive at the main term as $\limsup{t \to 0}$.
\begin{lemma}
For all $n$, there exists $C_{1} > 0$ such that
\begin{align*}
\abs*{t^{d}\sum_{\mathcal{P} \in \text{Cl}(G_{\infty})}n(\mathcal{P})^{-1}\sum_{P \in \mathcal{P}}\sum_{\omega_{\infty} \in \mathcal{E}_2(M_{\infty})}(\frac{1}{2\pi i})^{q}
\int\limits_{i\mathfrak{a}_{\infty}^*}\left|\left|\widehat{{H_{\infty,t,\zeta}^{\sharp}}}(\omega_{\infty},\nu_{\infty})\right|\right|^{2} \mu(\omega_{\infty},\nu_{\infty})d\nu_{\infty}\\
\widetilde{\Phi_{n,S'}}\star \Phi_{n,S'}(e)- d_{\tau}^2\alpha(G)} \leq C_{1}\epsilon.
\end{align*}
\end{lemma}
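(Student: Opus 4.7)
The plan is to reduce the full Plancherel sum over associated classes of parabolic subgroups to the minimal-parabolic contribution, to which Lemma~6 applies almost directly. Since $\widetilde{\Phi_{n,S'}}\star\Phi_{n,S'}(e)$ is independent of $t$, for each fixed $n$ it is just a positive scalar that identifies, via the $p$-adic Plancherel inversion, with the non-Archimedean contribution to $\alpha(G)$ appearing in \eqref{8}.

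First I would split the sum over $\mathcal{P}\in\text{Cl}(G_\infty)$ into the class containing the minimal parabolic $P_{0,\infty}$ and the remaining non-minimal classes. For any non-minimal standard parabolic $P=M^1AN$, setting $q=\dim i\mathfrak{a}_M^*$, one has $q<r$ and $\dim N<\dim N_0$, whence $q+\dim N<r+\dim N_0=d$. Applying the polynomial bound \eqref{7}, namely $\mu(\omega_\infty,\nu_\infty)\leq C'(1+\|\nu_\infty\|)^{\dim N}$, together with the Schwartz decay of $\widehat{H_{\infty,t,\zeta}^\sharp}$ inherited from the third defining property of $\psi$, the change of variable $\nu_\infty\mapsto t^{-1}\nu_\infty$ produces
\[
t^{d}\int_{i\mathfrak{a}_M^*}\bigl\|\widehat{H_{\infty,t,\zeta}^\sharp}(\omega_\infty,\nu_\infty)\bigr\|^2\mu(\omega_\infty,\nu_\infty)\,d\nu_\infty=O\bigl(t^{\,d-q-\dim N}\bigr),
\]
which tends to $0$ as $t\to 0$. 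Summing over the finitely many remaining associated classes, and over the finitely many $\omega_\infty$ for which $\tau|_{K_\infty\cap M^1}\supset\omega_\infty$, bounds the non-minimal contribution by $\epsilon$ for $t$ small enough.

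For the minimal-parabolic summand, Lemma~6 directly yields an $O(\epsilon)$ approximation to $d_{\tau}^2\alpha(G_\infty)$. Multiplying by the fixed positive scalar $\widetilde{\Phi_{n,S'}}\star\Phi_{n,S'}(e)$, whose value encodes the non-Archimedean Plancherel integral appearing in \eqref{8}, the resulting expression matches $d_{\tau}^2\alpha(G)$ to within $O(\epsilon)$. Combining with the non-minimal estimate and letting $C_{1}$ absorb the absolute constants (which may depend on $n$ and on the normalization of Haar measure on $K_{S'}'$) completes the bound.

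The principal obstacle I anticipate is the uniform control of the non-minimal parabolic integrals: the polynomial bound \eqref{7} has a degree $\dim N$ that varies with $M$, and one must verify that after the rescaling the resulting integrals remain uniformly finite. The $\mathbb{O}_\mathbb{C}$-invariance of $\psi$, built in via the averaging argument in Section~6, ensures that the restriction of $\psi$ to each $i\mathfrak{a}_M^*$ retains the decay of order $d+1$, so all rescaled integrals converge uniformly in $t\in(0,1]$; since $d+1$ exceeds $\dim N$ for every non-minimal $P$, the rapid decay of $\psi$ beats the polynomial growth of $\mu$, and the dimension deficit $d-q-\dim N\geq 1$ suffices to kill these contributions in the limit.
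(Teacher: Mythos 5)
Your proposal follows essentially the same route as the paper: split the Plancherel sum into the minimal-parabolic class and the rest, kill the non-minimal classes by the dimension count ($t^{d}$ against a density of order $t^{-l}$ with $l=q+\dim N<d$, which the paper states via the asymptotics of the Plancherel density and you via the bound \eqref{7} plus the decay of $\psi$), invoke Lemma 6 for the minimal term, and treat $\widetilde{\Phi_{n,S'}}\star\Phi_{n,S'}(e)$ as a $t$-independent $p$-adic constant folded into $\alpha(G)$. Your write-up is, if anything, slightly more explicit than the paper's about the rescaling and about why the non-Archimedean factor is harmless, so no substantive gap relative to the paper's argument.
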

\begin{proof}
We can ignore the terms related to the $p-$adic Plancherel formula as the tempered parameters in this case are finite disjoint unions of compact orbifolds, hence those terms will be automatically bounded. Therefore, we only concentrate on the Archimedean part. We have 
$$\left|t^{d}\sum_{\mathcal{P} \in \text{Cl}(G_{\infty})}n(\mathcal{P})^{-1}\sum_{P \in \mathcal{P}}\sum_{\omega_{\infty} \in \mathcal{E}_2(M_{\infty})}(\frac{1}{2\pi i})^{q}\int_{i\mathfrak{a}_{\infty}^*}\left|\left|\widehat{{H_{\infty,t,\zeta}^{\sharp}}}(\omega_{\infty},\nu_{\infty})\right|\right|^{2} \mu(\omega_{\infty},\nu_{\infty}) d\nu_{\infty} - d_{\tau}^2\alpha(G)\right|$$
$$=\left| t^{d} \times (\text{non-minimal terms})+t^{d}\sum_{\omega_{\infty} \in \mathcal{E}_2(M_{0,\infty})}(\frac{1}{2\pi i})^{r}\int_{i\mathfrak{a}_{0,\infty}^*}\left|\left|\widehat{{H_{\infty,t,\zeta}^{\sharp}}}(\omega_{\infty},\nu_{\infty})\right|\right|^{2} \mu(\omega_{\infty},\nu_{\infty}) d\nu_{\infty} - d_{\tau}^2\alpha(G)\right|$$

The Plancherel density corresponding to the non-minimal parabolic subgroups will have the following asymptotic estimate. For some integer $l<d$, we have
$$\int_{i\mathfrak{a}_{P,\infty}^*} \mu(\omega_{\infty},t^{-1}\nu_{\infty})d(t^{-1}\nu_{\infty}) \sim t^{-l} \quad \text{as} \quad t\to 0.$$
Therefore, the non-minimal terms will tend to 0 as $t\to 0$.
And the approximation for the other term was dealt with in Lemma 6.
\end{proof}
Therefore, We see from \eqref{21} the main term corresponding to the trivial conjugacy class is asymptotic to 
$$d_{\tau}^2\alpha(G)\text{Vol}(\Omega)t^{-d} \quad \text{as} \quad t \to 0.$$

\section{Bounds for the non-trivial classes}
To get the estimates for non-trivial conjugacy classes on the geometric side we write the Fourier inversion formula of Harish-chandra with respect to Eisenstein integrals. To this end we use the formula $(1.1)$ in \cite[Chap. III Sec. 1]{Ar}. It gives
\begin{eqnarray*}
&H_{\infty,t}(x)|_{(1,1)}\\         
&= \sum\limits_{\mathcal{P}}\lvert {\mathcal{P}\rvert}^{-1} \sum_{P \in \mathcal{P}}{\lvert W(\mathfrak{a}_{P})\rvert}^{-1}\int\limits_{i\mathfrak{a}_{\infty}^{*}}E_{P}(x_{\infty},\mu_{P}(\nu_{\infty})\widehat{H_{\infty,P}}(t\nu_{\infty}),\nu_{\infty})_{(1:1)} d\nu_{\infty},
\end{eqnarray*}
where $\mathcal{P}$ denotes an associated class of parabolic subgroups, the function $H_{\infty,t}$ lies in $C_{c}^{\infty}(G(\mathbb{R}),\tau)$. Note that $C_{c}^{\infty}(G(\mathbb{R}),\tau)$ is isomorphic to $C_{c}^{\infty}(G(\mathbb{R}))_{K_{\infty}}$ via the relation:
$$H_{\infty,t}(x)|_{k_{1},k_{2}}=H_{\infty,t}(k_{1}xk_{2}).$$
We concentrate on the part of the above series and integral corresponding to the minimal parabolic. We obtain the following inequality for the summand corresponding to minimal parabolic $P_{0,\infty}$:
\begin{eqnarray}\label{22}\nonumber
&\int\limits_{i\mathfrak{a}_{0,\infty}^{*}}\lvert E_{P_{0}}(x_{\infty},\mu_{P_{0}}(\nu_{\infty})\widehat{H_{\infty,P_{0}}}(t\nu_{\infty}),\nu_{\infty})|_{(1:1)}\rvert d\nu_{\infty}   &  \\   
&\leq \int\limits_{i\mathfrak{a}_{0,\infty}^{*}}\int\limits_{K_{\infty}}\lvert \widehat{H_{\infty,P_{0}}}(t\nu)(1:m(kx):1) \rvert \mu_{P_{0}}(\nu) \lvert e^{(\nu+\rho)H(kx_{\infty})}\rvert dk d\nu   &  
\end{eqnarray}
The right hand side of the above inequality is bounded by
$$\int\limits_{i\mathfrak{a}_{0,\infty}^{*}}\Bigg(\lVert \widehat{H_{\infty,P_{0}}}(t\nu) \rVert \mu_{P_{0}}(\nu) \int\limits_{K_{\infty}}\lvert e^{(\nu+\rho)H(kx)}\rvert dk\Bigg) d\nu.$$
Moreover, we have $\int_{K_{\infty}}\lvert e^{(\nu+\rho)H(kx)}\rvert dk d\nu \leq C_{1}(1+\lVert \nu\rVert)^{-1/2}$ when $x \notin K_{\infty}$ and lies in a fixed compact set\cite[Lemma 3]{LV}.
Now making a change of variable $\nu_{\infty} \to \frac{\nu_{\infty}}{t}$, using the Paley-Wiener bound of $\lVert \widehat{H_{\infty,P}}(\nu_{\infty}) \rVert$, and using the bound of the Plancherel Measure from  \eqref{7} from section 3, we obtain the following inequality:
\begin{eqnarray}\label{23}
\int\limits_{i\mathfrak{a}_{0,\infty}^*}\lvert E_{P}(x_{\infty},\mu_{P}(\nu)\widehat{H_{\infty,P}}(t\nu)(1:1),\nu)\rvert d\nu \leq C_{2}t^{-d+1/2}.
\end{eqnarray}\label{nontrivial}
Hence, we have the bound
\begin{eqnarray}
\left|H_{\infty,t}(x)|_{(1,1)}\right| \leq C_{2}t^{-d+1/2}. 
\end{eqnarray}
Now we apply the above bound for the Archimedean part of the integrand corresponding to the non-trivial conjugacy class of $\gamma \in \Gamma$ in \eqref{21}. Notice that similar to \cite[(6.3)]{LV} we can assume that the support of $d_{\tau}\text{Tr}(\phi_{n,S,t,\zeta}\star \widetilde{\phi_{n,S,t,\zeta}})$ that lies inside $K_{\infty}$ will have measure zero when projected onto $G(\mathbb{R})$. Moreover, 
$$d_{\tau}\text{Tr}(\phi_{n,\infty,t,\zeta}\star \widetilde{\phi_{n,\infty,t,\zeta}})=\Phi_{n,\infty,t,\zeta}^{\sharp}\star \widetilde{\Phi_{n,\infty,t,\zeta}^{\sharp}}.$$ 
This is a bi-$K_{\infty}$-finite function. Hence we can apply the above discussion. Arguing as in \cite[pg. 243]{LV} for the lower bound of Weyl's law, we can see that the terms corresponding to a non-trivial conjugacy class in \eqref{21} is bounded by $c|Z|t^{-d+1/2}$. Notice that as $\Gamma$ injects into $G_{S}$ diagonally, the cardinality of $Z$ is finite. Moreover the $L^1$ norm of  $\Phi_{n,S'}\mathbbm{1}_{K'}$ are bounded by a constant for all $n$. Therefore it follows from \eqref{21} that
\begin{eqnarray}\nonumber
\left|d_{\tau}\text{Tr}((\widetilde{\phi_{n,S,t,\zeta}}\star \phi_{n,S,t,\zeta})(e))\text{Vol}(\Omega)+d_{\tau}\sum_{\gamma \in Z}\int_{\Omega}\text{Tr}(\widetilde{\phi_{n,S,t,\zeta}}\star \phi_{n,S,t,\zeta})(x^{-1}\gamma x)\right|<\sum_{\lambda}C_{\phi_{n,S,t,\zeta}}\nonumber
\end{eqnarray}
or,
\begin{eqnarray*}
\left|d_{\tau}\text{Tr}((\widetilde{\phi_{n,S,t,\zeta}}\star \phi_{n,S,t,\zeta})(e))\text{Vol}(\Omega)\right|-c|Z|t^{-d+1/2}<\sum_{\lambda}C_{\phi_{n,S,t,\zeta}}\nonumber
\end{eqnarray*}
or,
\begin{eqnarray*}
\left|t^{d}\widetilde{\Phi_{n,S,t,\zeta}^{\sharp}}\star \Phi_{n,S,t,\zeta}^{\sharp}(e)\text{Vol}(\Omega)\right|-c|Z|t^{1/2}<t^{d}\sum_{\lambda}C_{\phi_{n,S,t,\zeta}}.
\end{eqnarray*}
Here, $C_{\phi_{n,S,t,\zeta}}=(e_{\lambda}\star \phi_{n,S,t,\zeta},e_{\lambda}\star \phi_{n,S,t,\zeta})$.
From \eqref{21} and \cite[Lemma 3.3]{BM} we have that
\begin{eqnarray}\label{25}
\sum\limits_{\lambda}(e_{\lambda}\star \phi_{n,S,t,\zeta},e_{\lambda}\star \phi_{n,S,t,\zeta})= \sum\limits_{\Pi_{\text{cusp}}(G(\mathbb{Q}_{S}),\tau)}m(\pi)\left|\left|\pi(\Phi_{n,S,t,\zeta}^{\sharp})\right|\right|_{\text{HS}}^2.
\end{eqnarray}
The multiplicities of $\pi_{\infty}$ can be written as:
\begin{eqnarray}\label{26}
m(\pi_{\infty})=\sum\limits_{\Pi_{\text{cusp}}(G(\mathbb{Q}_{S}),\tau)}^{'}m(\pi^')\text{dim}\mathcal{H}_{\pi^'}^{K_{S'}^{'}},
\end{eqnarray}
where the sum is over $\pi^'$ whose Archimedean component is $\pi_{\infty}$. If we let $n \to \infty$, we have that $\pi(\Phi_{n,t,S,\zeta}^{\sharp}) \to \text{dim}\mathcal{H}_{\pi^'}^{K_{S'}^{'}}\pi_{\infty}(H_{\infty,t,\zeta}^{\sharp})$. Hence rewriting the $(25)$ we get
\begin{eqnarray}\label{27}
\sum\limits_{\lambda}(e_{\lambda}\star \phi_{n,S,t,\zeta},e_{\lambda}\star \phi_{n,S,t,\zeta})= \sum\limits_{\Pi_{\text{cusp}}(G(\mathbb{R}),\tau)}m(\pi_{\infty})\left|\left|\pi_{\infty}\Bigg(H_{\infty,t,\zeta}^{\sharp}\Bigg)\right|\right|_{\text{HS}}^2.
\end{eqnarray}
The sum on the right hand side of \eqref{27}, could be divided into two parts, $\lVert \lambda_{\pi_{\infty}}\rVert^2 \leq t^{-2}$ and $\lVert \lambda_{\pi_{\infty}}\rVert^2 \geq t^{-2}$. Here $\lambda_{\pi_{\infty}}$ denotes the infinitesimal character of $\pi_{\infty}$. The representations $\pi_{\infty}$ is a subrepresentations of a non-unitary principle series representation with parameters $\omega_{\infty}\otimes\nu_{\infty}\otimes 1$. The infinitesimal character of $\lambda_{\pi_{\infty}}$ can be written as $\nu_{\omega_{\infty}}+\nu_{\infty}$, where $\nu_{\omega_{\infty}}$ is the infinitesimal character of $\omega_{\infty}$ \cite[Prop. 8.22]{Kn}. Let $d_{\omega_{\infty}}$ be the formal degree of $\omega_{\infty}$. We have the following inequality of Hilbert-Schmidt norm in terms of Fourier transform $\widehat{H_{\infty,t,\zeta}^{\sharp}}(\omega_{\infty},\nu_{\infty})$: 
\begin{eqnarray*}
\left|\left|\widehat{H_{\infty,t,\zeta}^{\sharp}}(\omega_{\infty},\nu_{\infty})\right|\right|^2\geq d_{\omega_{\infty}}\left|\left|\pi_{\infty}\Bigg(H_{\infty,t,\zeta}^{\sharp}\Bigg)\right|\right|_{\text{HS}}^2.
\end{eqnarray*}
Using the choice of the Schwartz function the right hand side of \eqref{27} is bounded by
\begin{eqnarray}\label{28}
\sum\limits_{\lVert\nu_{\omega_{\infty}}+\nu_{\infty}\rVert \leq t^{-2}}d_{\tau}m(\pi_{\infty})\text{dim}(\text{Hom}(\mathcal{H}_{\pi_{\infty}}(\tau),V_{\tau}))\left|\psi(t\nu_{\infty})\right|^2
\end{eqnarray}
Hence, using our earlier notations, we have 
\begin{eqnarray}\nonumber
&\sum\limits_{\lVert\nu_{\omega_{\infty}}+\nu_{\infty}\rVert \leq t^{-2}}d_{\tau}m(\pi_{\infty})\text{dim}(\text{Hom}_{K_{\infty}}(\mathcal{H}_{\pi_{\infty}}(\tau),V_{\tau}))\left|\psi(t\nu_{\infty})\right|^2 &\leq d_{\tau}N_{\text{cusp}}^{\Gamma}(t^{-2},\tau)
\end{eqnarray}

\section{Main Theorem}
In this last section we put all our earlier result together to prove pur main asymptotic formula.
Suppose $\Delta_{\tau}$ is the self-adjoint Casimir operator acting on $L_{\text{cusp}}^2(\Gamma \backslash G_{S},\tau)$ with pure point spectrum $0< \nu_{0}(\tau)\leq \nu_{1}(\tau) \leq \nu_{2}(\tau)... \to \infty$ Let $\mathcal{E}(\nu_{i}(\tau))$ denote the space of eigenvectors with eigenvalue $\nu_{i}(\tau)$. Define
$$N_{\text{cusp}}^{\Gamma}(T^2,\tau)= \sum_{\nu_{i}(\tau)\leq T^{2}}\text{dim}\mathcal{E}(\nu_{i}(\tau)).$$
Let $M$ be a Riemannian Manifold. Suppose $C(M)$ denotes the product of volume of $M$, the volume of the Euclidean unit ball in $\mathbb{R}^{\text{dim}(M)}$ and $(2\pi)^{-\text{dim}(M)}$.    
Collecting all the results in the previous section, we prove the following:
\begin{theorem*}
Let $G$ be a semi-simple, connected, algebraic group over $\mathbb{Q}_{S}$. Assume that $G$ is also split and adjoint type. Let $\Gamma \subset G(\mathbb{Z}[S^{-1}])$ be a congruence subgroup with no torsion element. Let $X_{\infty}=G_{\infty}/ K_{\infty}$ and $d=\text{dim}_{\mathbb{R}}X_{\infty}$. Let $\tau$ be an irreducible representation of $K_{\infty}$ of dimension $d_{\tau}$. Then there exists a constant $C(\Gamma \backslash X_{\infty})>0$, such that
$$N_{\text{cusp}}^{\Gamma}(T^{2},\tau) \sim d_{\tau}C(\Gamma \backslash X_{\infty})T^{d} \quad \text{as} \quad T\to \infty.$$
\end{theorem*}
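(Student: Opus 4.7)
The plan is to combine the partial trace formula of Section 5 with the Plancherel approximation of Lemma 6 and the non-trivial conjugacy class bound of Section 7, and match the resulting lower bound to Donnelly's upper bound. I will work with $T = t^{-1}$ throughout, so that the target asymptotic $T^d$ is $t^{-d}$ as $t \to 0$, and consistently use the family of test functions $\phi_{n,S,t,\zeta} = \phi_{n,S} \star h_{S,t,\zeta}$ constructed in Section 6.

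First I would fix a compact set $\Omega \subset \Gamma \backslash G_S$ with $\operatorname{Vol}(\Omega)$ arbitrarily close to $\operatorname{Vol}(\Gamma \backslash G_S)$ and insert the test functions into inequality \eqref{21}. Passing $n \to \infty$ inside the identity $d_\tau \operatorname{Tr}((\widetilde{\phi_{n,S,t,\zeta}} \star \phi_{n,S,t,\zeta})(e)) = \widetilde{\Phi^\sharp_{n,S,t,\zeta}} \star \Phi^\sharp_{n,S,t,\zeta}(e)$ and invoking the Plancherel formula, Lemma~7 tells us that
\begin{equation*}
t^d \cdot \widetilde{\Phi^\sharp_{n,S,t,\zeta}} \star \Phi^\sharp_{n,S,t,\zeta}(e) \longrightarrow d_\tau^2 \alpha(G) + O(\epsilon)
\end{equation*}
as $t \to 0$, the contributions from non-minimal parabolics being $O(t^{d-\ell})$ for some $\ell < d$ and hence negligible after multiplication by $t^d$. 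Multiplying by $\operatorname{Vol}(\Omega)$ and letting $\Omega \uparrow \Gamma \backslash G_S$ gives the leading geometric contribution $d_\tau^2 \alpha(G) \operatorname{Vol}(\Gamma \backslash G_S) \, t^{-d}$.

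Next I would estimate the non-identity geometric terms. By the discussion following \eqref{23}, each non-trivial conjugacy class contributes at most $c t^{-d+1/2}$, and the set $Z$ of relevant conjugacy classes is finite because $\Gamma$ embeds diagonally in $G_S$ and the support of $\operatorname{Tr}(\phi_{n,S,t,\zeta} \star \widetilde{\phi_{n,S,t,\zeta}})$ is controlled independently of $n$ and $t$; moreover, the $L^1$-norms of the non-Archimedean factors $\Phi_{n,S'}$ are bounded uniformly in $n$. Hence these terms are $O(t^{-d+1/2})$, strictly of lower order than the main term. Substituting into \eqref{21} yields, after $n \to \infty$,
\begin{equation*}
\bigl(d_\tau^2 \alpha(G) \operatorname{Vol}(\Gamma \backslash G_S) - O(\epsilon)\bigr) t^{-d} - O(t^{-d+1/2}) \;\leq\; \sum_{\pi_\infty \in \Pi_{\mathrm{cusp}}(G(\mathbb{R}),\tau)} m(\pi_\infty) \bigl\| \pi_\infty(H^\sharp_{\infty,t,\zeta}) \bigr\|_{\mathrm{HS}}^2.
\end{equation*}
On the spectral side, using the bound $\|\pi_\infty(H^\sharp_{\infty,t,\zeta})\|_{\mathrm{HS}}^2 \leq d_\omega^{-1} \|\widehat{H^\sharp_{\infty,t,\zeta}}(\omega_\infty,\nu_\infty)\|^2 = d_\tau m_{\omega_\infty} |\psi(\nu_{\omega_\infty}+t\nu_\infty)|^2$ and the Knapp parametrisation of infinitesimal characters $\lambda_{\pi_\infty} = \nu_{\omega_\infty}+\nu_\infty$, together with the decay property of $\psi$ outside the unit ball, I get the upper bound \eqref{28}, which in turn is $\leq d_\tau \cdot N^\Gamma_{\mathrm{cusp}}(t^{-2},\tau) \cdot \sup |\psi|^2$ up to $\epsilon$-corrections. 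Dividing through by $t^{-d}$ and letting $t \to 0$ then $\epsilon \to 0$ produces the lower bound $\liminf_{T \to \infty} T^{-d} N^\Gamma_{\mathrm{cusp}}(T^2,\tau) \geq d_\tau C(\Gamma \backslash X_\infty)$ with $C(\Gamma \backslash X_\infty)$ the expected Weyl constant (here $\alpha(G)$ from Section 3 unpacks into $\operatorname{vol}(\Gamma \backslash G_\infty)/((4\pi)^{d/2}\Gamma(d/2+1))$ after the standard computation of Plancherel densities on Euclidean balls).

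The main obstacle, and the reason the proof takes this form rather than a direct one, is that the Plancherel density in $\tau$-type arises only from irreducible subquotients of non-unitary principal series whose $K_\infty$-types contain $\tau$, so one cannot simply test against $d_\tau \chi_\tau$. The delicate step is the passage from the scalar Fourier transform of $H^\sharp_{\infty,t,\zeta}$ to an operator-theoretic bound involving only the $\tau$-isotypic pieces, which depends crucially on Arthur's multiplier theorem to make $\widehat{H^\sharp_{\infty,t,\zeta}}$ into a genuine Schwartz function supported on a bounded shell, and on the Frobenius-reciprocity identity $\sum_{\omega_\infty} d_{\omega_\infty} m_{\omega_\infty} = d_\tau$ to recover the correct power of $d_\tau$ in the main term. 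To finish, I would match the resulting lower bound against Donnelly's upper bound $\limsup_{T \to \infty} T^{-d} N^\Gamma_{\mathrm{cusp}}(T^2,\tau) \leq d_\tau C(\Gamma \backslash X_\infty)$, which applies unconditionally to the full discrete spectrum, to conclude the asymptotic equality stated in the theorem.
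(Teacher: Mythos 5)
Your proposal is correct and follows essentially the same route as the paper: the partial trace formula applied to $\phi_{n,S,t,\zeta}\star\widetilde{\phi_{n,S,t,\zeta}}$, with the identity term controlled by the Plancherel approximation lemma, the non-trivial classes bounded by $O(t^{-d+1/2})$ via the Eisenstein-integral estimate, the spectral side dominated by $d_{\tau}N^{\Gamma}_{\mathrm{cusp}}(t^{-2},\tau)$ through the choice of $\psi$, and the resulting lower bound matched against Donnelly's upper bound with $\alpha(G)\mathrm{Vol}(\Gamma\backslash G_{S})$ identified with the Weyl constant. No essential differences from the paper's argument.
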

\begin{proof}
We make a change of variable $t=\frac{1}{T}$, and prove the asymptotic as $\limsup_{t \to 0}$.
Let us apply the partial trace formula in $(17)$ with $\phi$ being the test function $\phi_{n,S,t,\zeta}\star \widetilde{\phi_{n,S,t,\zeta}}$ in Section 8. Taking the limit as $n \to \infty$ and using $(21)$ the inequality becomes 
\begin{eqnarray}
\text{Tr}((h_{S,t,\zeta}\star \widetilde{h_{S,t,\zeta}})(e))\text{Vol}(\Omega)+\sum_{\gamma \in Z}\int_{\Omega}\text{Tr}(h_{S,t,\zeta}\star \widetilde{h_{S,t,\zeta}})(x^{-1}\gamma x)\leq N_{\text{cusp}}^{\Gamma}(t^{-2},\tau).
\end{eqnarray}
Now from Section 8 and Lemma 7 we can conclude that the term corresponding to the identity class will be asymptotic to $d_{\tau}\alpha(G)t^{-d}\text{Vol}(\Gamma \backslash G_{S})$ as $\limsup_{t \to 0}$ and as $\lim_{n\to \infty}$. And from $(24)$ section 9 we can show that as we take $\limsup_{t\to 0}$ the terms corresponding to non-identity class will converge to 0. This is done exactly as in the proof of the lower bound in the Weyl law, \cite[page 243]{LV}.  
There exist $\Gamma_{\infty,i}$, for finitely many $i$ such that 
$$\Gamma \backslash G_{S}/K_{S}= \bigcup\limits_{i} \Gamma_{\infty,i} \backslash G(\mathbb{R})/K_{\infty}.$$
For each $i$, let $N_{i}^{\Gamma}(T,\tau)$ be the eigenvalue counting function for the space $\Gamma_{\infty,i} \backslash G(\mathbb{R})/K_{\infty}$.
That this same asymptotic term along with the constant $C(\Gamma_{\infty,i} \backslash X_{\infty})$ is an upper bound for the right hand side has been proved by in greater generality by Donnelly \cite{Do}. To prove,
$$\alpha(G)\text{Vol}(\Gamma \backslash G_{S})=\sum\limits_{i}C(\Gamma_{\infty,i} \backslash X_{\infty}),$$ we argue as in \cite[Sec.6.3]{LV}. Therefore it establishes the asymptotic formula in the statement of the theorem.

\end{proof}
\subsection*{Declaration} The author declares that there is no conflict of interest. Any dataset that may have been generated during the current study is available from the author.

\end{document}